\newcommand{\ra}{\rightarrow}
\newcommand{\sra}{\twoheadrightarrow}
\newcommand{\hra}{\hookrightarrow}
\newcommand{\sira}{\stackrel{\sim}{\rightarrow}}
\newcommand{\sirra}{\stackrel{\approx}{\rightarrow}}
\newcommand{\ua}{\uparrow}
\newcommand{\RA}{\Rightarrow}
\newcommand{\Bl}[1]{{\mathbb{#1}}}
\newcommand{\DZ}{\Bl{Z}}
\newcommand{\DN}{\Bl{N}}
\newcommand{\op}{\operatorname}
\newcommand {\td} {\tilde}
\newcommand {\wt} {\tilde}
\newcommand {\wh} {\hat}
\newcommand  {\Idp}[1]  {{1}_{#1}} %Idempotente
\newcommand{\opU}{\mathrm{U}}
\newcommand{\opS}{\mathrm{S}}
\newcommand  {\ZZ}{\ensuremath{{\mathbb Z}}}
\newcommand{\calA}{\mathcal{A}}
\newcommand{\calB}{\mathcal{B}}
\newcommand{\calC}{\mathcal{C}}
\newcommand  {\frb}{\ensuremath{{\mathfrak b}}}
\newcommand  {\frg}{\ensuremath{{\mathfrak g}}}
\newcommand  {\frh}{\ensuremath{{\mathfrak h}}}
\newcommand {\scO}{\ensuremath{{\mathcal{O}}}}
\newcommand {\eps}{\ensuremath{{\epsilon}}}
\theoremstyle{plain} 
\newtheorem{thm}[subsection]{Theorem}
\newtheorem{lemma}[subsection]{Lemma}
\newtheorem{prop}[subsection]{Proposition}
\newtheorem{cor}[subsection]{Corollary}
\theoremstyle{definition} %defs, bsp, bemerkungen
\newtheorem{defi}[subsection]{Definition}
\theoremstyle{remark} 
\newtheorem{example}[subsection]{Example}
\newtheorem{remark}[subsection]{Remark}
\newtheorem{remarkl}[subsection]{ }
\newcommand{\cemph}{\textcolor{black}} 
\begin{document}
\author{Michael Rottmaier and Wolfgang Soergel}
\title[Unicity of graded covers
 of the Category $\scO$]{Unicity of graded covers
 of the Category $\scO$ of Bernstein-Gelfand-Gelfand}
\address{Universit\"at Freiburg\\Mathematisches Institut
\\Eckerstra\ss e 1\\ D-79104 Freiburg\\Germany}
\email{wolfgang.soergel@math.uni-freiburg.de, Kichel@gmx.net}
\thanks{partially supported by the DFG in the framework of
SPP 1388 and GK 1821}
  \begin{abstract}
    We show that the standard graded cover of the well-known category $\scO$
    of Bernstein-Gelfand-Gelfand can be characterized by its compatibility with
    the action of the center of the enveloping algebra.
  \end{abstract}

\maketitle
\tableofcontents

\section{Introduction}

Let $ \frg \supset \frb \supset \frh$ be a complex
semi-simple Lie-algebra 
with a choice of a Borel and Cartan subalgebra. 
In \cite{BGGO} Bernstein, Gelfand
and Gelfand introduced  the so called category 
$\scO=\scO(\frg,\frb)$ of representations
of $\frg$. 
{Later on Beilinson and Ginzburg \cite{BGi} argued}
that it is natural to study $\ZZ$-gradings
of category $\scO$, {see also
\cite{BGSo}}. 
In this article we  introduce the notion of a graded cover,
a  generalization of the notion
of a $\ZZ$-grading which seemed to be more natural to us, and prove the 
following {uniqueness theorem for graded covers of $\mathcal O$},
 to 
be explained in  more detail in the later parts of this introduction.
 
\begin{thm}[{\bf Uniqueness of graded covers of $\scO$}] 
  \begin{enumerate}
  \item Cate\-gory $\scO$
admits a graded cover compatible with
    the action of the center of the universal enveloping algebra of $\frg$;
    \item If two graded covers of $\scO$ are 
both compatible with the action of
    the center of the universal enveloping algebra, they are cover-equivalent.
  \end{enumerate}
 \label{thm:MainTheorem}\end{thm}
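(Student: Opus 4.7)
The plan is to reduce both parts of the theorem to a block-by-block analysis, exploiting that by Harish-Chandra the center $Z(\frg)$ acts on $\scO$ through central characters, inducing a decomposition $\scO=\bigoplus_\chi\scO_\chi$ into indecomposable summands. Each block $\scO_\chi$ is equivalent to the category of finite-dimensional modules over the algebra $A_\chi=\operatorname{End}_{\scO_\chi}(P_\chi)^{\op{op}}$ associated to a minimal projective generator. Compatibility of a graded cover with $Z(\frg)$ as well as the relation of cover-equivalence should both respect this decomposition, so the theorem reduces to a single-block statement for each $\chi$.

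For the existence statement~(1), I would construct the standard graded cover using Soergel's combinatorial description (compare \cite{BGSo}). Soergel's functor $V=\operatorname{Hom}_\scO(P(w_0\cdot\lambda),-)$ embeds the full subcategory of projectives in $\scO_\chi$ fully faithfully into modules over the $W_\chi$-invariants of the coinvariant algebra $C=S(\frh)/(S(\frh)^W_+)$, a positively graded ring. Its image admits a canonical graded lift, yielding a graded algebra $\widetilde A_\chi$ whose underlying ungraded algebra is $A_\chi$; the category of finite-dimensional $\ZZ$-graded $\widetilde A_\chi$-modules, together with the grading-forgetful functor to $\scO_\chi$, is the desired graded cover $\widetilde{\scO}_\chi$. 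Compatibility with $Z(\frg)$ is then immediate from Harish-Chandra's identification of the image of $Z(\frg)$ in $A_\chi$ with $C^W\subset C^{W_\chi}$, a graded subalgebra on which the center acts by homogeneous operators.

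For the uniqueness statement~(2), I would build a cover-equivalence between any two center-compatible graded covers by choosing graded lifts of a projective generator in each and arguing that the resulting $\ZZ$-gradings on $A_\chi$, both extending the standard grading of $C^W$ via Harish-Chandra, must be related by a graded algebra isomorphism covering the identity. Soergel's Struktursatz is decisive here: it realizes $A_\chi$ as an endomorphism algebra of a specific module over the image of the center, so a grading on $A_\chi$ extending the fixed grading on this image reduces to an auxiliary grading on a module over the positively graded ring $C^{W_\chi}$ with semisimple degree-zero part, and such graded lifts are rigid. The main obstacle is precisely this rigidity step: a priori any finite-dimensional algebra carries a $\CC^\times$-worth of grading-twists $\lambda^{\op{deg}}$, and one must invoke the center-compatibility --- rather than just the existence of some grading --- to exclude nontrivial twists up to cover-equivalence. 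Singular and non-integral blocks, where $W_\chi$ is nontrivial, introduce additional bookkeeping that should be handled uniformly, possibly through translation functors reducing to the principal regular block.
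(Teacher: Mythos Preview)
Your overall architecture matches the paper's: reduce to blocks, pass to the endomorphism ring $A_\chi$ of a projective generator, and exploit Soergel's Struktursatz and Endomorphismensatz. The existence argument is essentially the construction the paper cites from \cite{So,BGSo}, so that part is fine.

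For uniqueness, however, the decisive step is stated too loosely. You say a center-compatible grading on $A_\chi$ ``reduces to an auxiliary grading on a module over $C^{W_\chi}$, and such graded lifts are rigid.'' But graded lifts of a module over a positively graded local ring are unique only up to shifts of indecomposable summands, so you still owe an argument for why two such lifts $\tilde Q$, $\hat Q$ of the antidominant prinjective $Q$ induce \emph{cover-equivalent} gradings on $A_\chi=\op{End}_{C^{W_\chi}}Q$. The paper isolates exactly this step via the \emph{bicentralizer property}: the Struktursatz gives $A_\chi\overset{\sim}{\to}\op{End}_{\op{End}_{A_\chi}Q}Q$, and then for any two gradings $\tilde A$, $\hat A$ on $A_\chi$ compatible with the same grading on $\op{End}_{A_\chi}Q$ one transports the grading on $\op{Hom}_{\op{End}_A Q}(\tilde Q,\hat Q)$ back along this isomorphism to obtain a graded $\hat A$-$\tilde A$-bimodule structure on $A_\chi$ itself---which is precisely the datum of a cover-equivalence (Proposition~\ref{prop:EquivalenceCriterium}). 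The Endomorphismensatz then ensures that center-compatibility pins down the grading on $\op{End}_{A_\chi}Q$ uniquely, since $Z/\chi^n\twoheadrightarrow\op{End}_{A_\chi}Q$. Your sketch would eventually have to arrive at this construction, but the bicentralizer formulation (Propositions~\ref{cG} and~\ref{CCE}) makes the logic transparent and avoids any case analysis on the structure of $Q$ as a $C^{W_\chi}$-module.

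Two smaller remarks. Your concern about ``a $\CC^\times$-worth of grading-twists $\lambda^{\deg}$'' is a red herring: conjugating by the automorphism that scales degree-$n$ elements by $\lambda^n$ is homogeneous and does not change the grading at all. The genuine ambiguity in a grading on $A_\chi$---up to cover-equivalence---is by independent shifts on indecomposable projective summands together with conjugation by a unit, as in Proposition~\ref{prop:EquivalenceCriterium}. And the suggestion to reduce singular or non-integral blocks to the principal block via translation functors is unnecessary: the bicentralizer argument works uniformly for every block, since the Struktursatz and Endomorphismensatz hold in that generality.
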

\begin{remarkl}
  Graded covers of category
 $\scO$ which are compatible with the
  action of the center have already been constructed in \cite{So}
  and \cite{BGSo}. The main point of this article is to show that compatibility
 with the action of the center already determines a graded cover of
  $\scO$ up to cover-equivalence. 
To our knowledge this statement is already  new
in the case of
 $\ZZ$-gradings \cite[4.3]{BGSo}, although to see this case treated
the reader 
could skip most of the paper and 
go directly to the proof of \ref{thm: GradingsOnO}, which mainly reduces to
a careful application of the bicentralizer property as discussed in section
\ref{gbz}.
In the body of the paper, we mainly investigate the notion of a 
graded cover.  
\end{remarkl}

\begin{remarkl}
  An analogous theorem holds with the same proof for 
the modular  versions of category $\mathcal O$ considered in \cite{So-R,
  RSW}. It is for this case, that the generalization from gradings to
graded covers is really needed.  An analogous theorem also holds for
the category of all Harish-Chandra modules over a complex connected reductive 
algebraic group, considered as a real Lie group. 
If we
interpret our Harish-Chandra modules as bimodules over the enveloping algebra
in the usual way and
 restrict to the subcategories
of objects killed from the left by a fixed power of a 
given maximal ideal in the center of the enveloping
algebra, the same proof in conjunction with \cite{HCH} will work. 
To deduce the general case,  some additional arguments are needed to 
justify passing to
the limit, which can be found in the diploma thesis of M. Rottmaier. 
\end{remarkl}

\begin{defi}
  An abelian category in which each object has finite length will
    henceforth be called an {\bf artinian category}.
\end{defi}

\begin{defi}
  By a {\bf graded cover} of an artinian category $\mathcal A$ 
we understand a triple\label{def:zgradedversion} 
  $(\wt{\mathcal A}, v, \eps)$ consisting of an abelian category $\wt{\mathcal A}$
  equipped with a strict automorphism $[1]$ ``shift the grading'', an exact
  functor $v: \wt{\mathcal A} \rightarrow \mathcal A$ ``forget the grading''
and an isotransformation of functors
  $\eps: v \overset{\sim}{\Rightarrow} v[1]$, such that the following hold:
  \begin{enumerate}
  \item For all $ M, N \in \wt{\mathcal A}$ the pair $(v, \eps)$ induces an
    isomorphism on the homomorphism groups $\bigoplus_{i \in \mathbb{Z}}
    \wt{\mathcal A}(M, N[i]) \overset{\sim}{\rightarrow} \mathcal A(vM, vN)$;
  \item Given $ M \in \mathcal A $,
$N\in\cemph{\tilde{\mathcal A}}$ and an 
epimorphism $M\sra vN$ there exists $P\in \cemph{\tilde{\mathcal A}}$ and
a\cemph{n epi}morphism $vP\sra M$ such 
that the composition $vP\ra vN$ comes from a
morphism $P\ra N$ in $\tilde{\mathcal A}$.
% For all $ M \in \mathcal A $ there exists 
% an epimorphism
%  $v\td{N} \twoheadrightarrow M$ with $\td{N} \in \wt{\mathcal A}$.
  \end{enumerate}
%and a monomorphism 
%$M \hookrightarrow v\td{K}$ with $ \td{K} \in \wt{\mathcal A}$.
\end{defi}

\begin{remark}
  The main difference to the concept of a $\ZZ$-grading
in the sense of \cite[4.3]{BGSo} is that for our graded covers 
we don't ask for any kind of  positivity or
semisimplicity. In particular, if we start with a grading
and  ``change all degrees to their negatives'', we
would always  get a graded cover again, but most of the time this 
would not be
a grading anymore. If $A$ is an left-artinian ring with a 
$\DZ$-grading, for which in \cemph{this} article 
we ask  no positivity condition whatsoever, then 
the forgetting of the grading on finitely generated graded modules 
$v: \tilde A\op{-Modf}^\DZ\ra A\op{-Modf}$
 with the obvious $\epsilon$ always 
is a graded cover, see \ref{ex:StandardExample}.
In \ref{ACO} we will show that the opposed category of a graded cover 
is a graded cover of the opposed category. 
\end{remark}

\begin{remark}
We would like to know whether  condition (2) 
will follow in general, when we ask it only in case $N=0$. 
\end{remark}

\begin{remark}
 We will try to strictly follow a notation, where calligraphic letters
$\mathcal A,\mathcal B,\ldots$ denote categories, 
roman capitals $F,G,\ldots$ denote functors between 
or objects of our categories, and
little greek letters $\tau, \epsilon,\ldots$ denote transformations. 
The only exception is the ``forgetting of grading'' in all its variants, 
to be denoted by the small letter $v$
 although it is a functor.
\end{remark}

\begin{defi}
  We say that a graded cover $(\wt{\scO}, v, \eps)$
of the BGG-category $\scO$ is {\bf compatible with the action of the center} 
  % $\opZ
  $Z\subset \opU(\frg)$ 
of the enveloping algebra iff the following holds: Given an object 
$\td{M} \in \wt{\scO}$
  and a maximal ideal  $\chi\subset Z$
 such that $\chi^{n}(v\td{M}) = 0$ for some
  $n \in\DN$, the induced morphism
  \[ Z / \chi^{n} \rightarrow \text{End}_{\frg}(v\td{M}), \quad z +
  \chi^{n} \mapsto (z \cdot) \] is homogeneous for the grading on
  End$_{\frg}(v\td{M})$ induced by the pair $(v, \eps)$ and the natural
  grading on $Z/\chi^{n}$ induced by the Harish-Chandra-homomorphism 
 as explained in the  next remark \ref{par:natgrading}.
\end{defi}
\begin{remarkl}[{\bf The natural grading on
      $Z/\chi^n$}] \label{par:natgrading} Let $S=\opS(\mathfrak h)$ be the
    symmetric algebra of our Cartan.  The Weyl group $W$ acts on it in a
    natural way. We
    have the Harish-Chandra isomorphism $Z\overset{\sim}{\rightarrow} S^W$.
    For any maximal ideal $\lambda\subset S$ let $W_\lambda$ be its isotropy
    group and $Y=Y(\lambda)\subset S$ be the $W_\lambda$-invariants and put
    $\chi=\lambda\cap Z$ and $\mu=\lambda\cap Y$.  Now general results in
    invariant theory \cite[Ex. 3.18.]{Liu} tell us, that the natural maps 
are in fact
    isomorphisms
$$Z^\wedge_\chi\overset{\sim}{\rightarrow} (S^\wedge_\lambda)^{W_\lambda}
\overset{\sim}{\leftarrow}Y^\wedge_\mu$$ from the completion of the invariants
to the invariants of the completion, leading to isomorphisms
$Z/\chi^n\overset{\sim}{\rightarrow} Y/\mu^n$.  Moving the obvious
$W$-invariant grading on $S$ with the comorphism of the translation by
$\lambda$, we obtain a $W_\lambda$-invariant 
grading on $S$, which induces a grading on
$Y$ with $\mu=Y^{>0}$ its part of positive degree. This way we get a natural
grading on $Y/\mu^n$. The reader may easily 
check that the induced grading on $Z/\chi^n$
doesn't depend on the choice of $\lambda$. We call
it the {\bf natural grading on $Z/\chi^n$}.
\end{remarkl}

\begin{defi}
  Let $(\wt{\mathcal A},\td v, \td \eps)$ and $(\wh{\mathcal A}, \hat{v},
  \hat{\eps})$ be\label{qez} 
  graded covers of an artinian category $\mathcal A$. 
An {\bf cover-equivalence} is a triple
  $(F, \pi,\epsilon)$, where $F: \wt{\mathcal A} \rightarrow \wh{\mathcal A}$ is an additive functor
  and $\epsilon :[1]F \overset{\sim}{\Rightarrow} F[1]$ and $\pi: \hat{v}  F
  \overset{\sim}{\Rightarrow} \td v$ are isotransformations of functors such that
  the following diagram of isotransformations of functors commutes:
  \[
  \begin{xy}
    \xymatrix{	\hat{v} [1] F \ar@{=>}[d]^\wr_{\hat{\epsilon}}
      \ar@{=>}[r]^{\epsilon}_{\sim}
      &	\hat{v} F [1] \ar@{=>} [r]^{\pi}_{\sim} 		& \td v [1]
      \ar@{=>}[d]^{\td \epsilon}_\wr  \\
      \hat{v} F \ar@{=>}[rr]^{\pi}_{\sim} & & \td v }
  \end{xy}
  \]
Two covers are said to be {\bf cover-equivalent} iff there is a
cover-equi\-valence from one to the other. We will show in 
\ref{coeq} that this is indeed an equivalence relation. 
 \end{defi}

 \begin{remarkl}
   This generalizes  the definition of equivalence of gradings given in
   \cite[4.3.1.2]{BGSo}. We will show in \ref{grCO} that
given a left-artinian ring $A$  every graded cover of
   $A\op{-Modf}$ is 
cover-equivalent to the graded cover given by a $\DZ$-grading 
on $A$. The question when
   two graded 
covers of this type are cover-equivalent to each other is discussed in
   \ref{prop:EquivalenceCriterium}.
 \end{remarkl}

\section{Graded covers of artinian categories}

\begin{example}  \label{ex:StandardExample}

Let $A$ be a left-artinian ring. Given any $\ZZ$-grading $\, \wt{} \,$ on $A$ and let 
($\wt{A}\operatorname{-Modf}^{\ZZ}$, $v$, $\eps$) be the category of finitely generated $\ZZ$-graded left $\wt{A}$-modules with morphisms homogeneous of degree 0 and $(v,\eps)$ the natural forgetting of the grading.
Then ($\wt{A}\operatorname{-Modf}^{\ZZ}$, $v$, $\eps$) is a graded cover of $A$-Modf.   
To check the second condition in definition \ref{def:zgradedversion}, 
 let $M\sra N$ be a surjection of a not
necessarily graded module onto a graded module. A generating system of $M$
gives
a generating system of $N$. Take the nonzero  homogeneous components
of its elements. It is possible to choose preimages  of these 
components in $M$ in such a way, that they generate $M$. Then a suitable 
graded free $A$-module $P$ with its basis vectors going to these 
preimages will do the job. 
\end{example}

\begin{remark}
Every graded cover  of an artinian category
is also artinian, as the length can get only bigger when we apply
an exact functor which doesn't annihilate any object.
\end{remark}

\begin{defi}

Given a graded cover $(\wt{\calA}, v, \eps)$ of an artinian category 
$\calA$, a
\textbf{$\ZZ$-graded lift} or for short {\bf lift} of an object $M \in \calA$
is a pair $(\td{M}, \varphi)$ with $\td{M} \in \wt{\calA}$ and 
$\varphi: v \td{M} \overset{\sim}{\rightarrow} M$ an isomorphism in $\calA$.
\end{defi}
\begin{lemma} \label{lem:UniquenessOfLiftOfIndemcomp}

If for an indecomposable object there exists a lift, then this 
lift is unique up to isomorphism and shift. 

\end{lemma}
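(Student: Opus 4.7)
The plan, given two lifts $(\td M_1, \varphi_1)$ and $(\td M_2, \varphi_2)$ of an indecomposable $M \in \calA$, is to produce an integer $i_0$ together with an isomorphism $\td M_1 \sira \td M_2[i_0]$ in $\wt{\calA}$. First, the canonical isomorphism $h := \varphi_2^{-1}\circ\varphi_1 : v\td M_1 \sira v\td M_2$ and its inverse decompose, via condition (1) of Definition~\ref{def:zgradedversion}, into finite families $f_i \in \wt{\calA}(\td M_1, \td M_2[i])$ and $g_j \in \wt{\calA}(\td M_2, \td M_1[j])$. Because the bijection of condition (1) is built from $\eps$ and, once shifts are accounted for, respects composition, the identity $h^{-1}\circ h = \mathrm{id}_{v\td M_1}$ rewrites in $\wt{\calA}(\td M_1,\td M_1)$ as
\[ \sum_{i} g_{-i}[i]\circ f_i \;=\; \mathrm{id}_{\td M_1}, \]
together with the vanishing of $\sum_{i+j=k} g_j[i]\circ f_i$ for every $k \neq 0$.

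Next I invoke indecomposability. Since $\calA$ is artinian, $M$ has finite length, so $E := \mathrm{End}_\calA(M)$ is local by Fitting's lemma. Pulling back along $\varphi_1$ and using condition (1), $E$ identifies with the graded ring $\bigoplus_i \wt{\calA}(\td M_1, \td M_1[i])$, whose degree-zero subring $E_0 = \wt{\calA}(\td M_1, \td M_1)$ is itself local: an element of $E_0$ is a unit there iff it is a unit in $E$, since any two-sided inverse in $E$ must itself lie in degree zero by a degree count. Consequently the non-units in $E_0$ form an ideal, and the displayed identity expresses $1 \in E_0$ as a finite sum of the elements $g_{-i}[i]\circ f_i$, forcing at least one of them, say for $i=i_0$, to be a unit in $E_0$. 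In particular $f_{i_0} : \td M_1 \ra \td M_2[i_0]$ is a split monomorphism.

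Finally, I split $\td M_2[i_0] \cong \td M_1 \oplus C$ in $\wt{\calA}$ and apply $v$; using the natural isomorphism $v\td M_2 \cong v\td M_2[i_0]$ provided iteratively by $\eps$ together with $\varphi_2$, this yields $M \cong M \oplus vC$ in $\calA$. Indecomposability of $M$ forces $vC = 0$, and a further application of condition (1) to the pair $(C,C)$ gives $\mathrm{End}_{\wt{\calA}}(C) \hra \bigoplus_k \wt{\calA}(C, C[k]) \sira \mathrm{End}_\calA(vC) = 0$, so $\mathrm{id}_C = 0$ and $C = 0$. Hence $f_{i_0}$ is an isomorphism $\td M_1 \sira \td M_2[i_0]$, as required. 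The principal obstacle is the first step's careful bookkeeping with $\eps$ needed to match composition in $\calA$ with graded composition in $\wt{\calA}$; once this is in hand, the remainder is a textbook local-ring / Krull--Schmidt argument combined with the graded faithfulness of $v$ supplied by condition (1).
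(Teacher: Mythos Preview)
Your proof is correct and follows the same core idea as the paper's: decompose the isomorphism $v\td M_1 \sira v\td M_2$ into homogeneous components and use locality of $\mathrm{End}_{\calA}(M)$ to extract an invertible one. The paper is slightly more direct in that it works in $\mathrm{End}_{\calA}(M)$ rather than in $E_0$: the images of the $f_i$ there already sum to the identity, so some $vf_{i_0}$ is a unit, and exactness together with faithfulness of $v$ (condition~(1)) immediately makes $f_{i_0}$ an isomorphism, avoiding your split-monomorphism and complement step.
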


\begin{proof}
Given an indecomposable object $M \in \calA$, its endomorphism ring $\calA(M,M)$
is a local ring. Suppose there are two  lifts ($\td{M}, \varphi)$, $(\hat{M}, \psi)$ of $M$. 
We can then decompose the identity morphism of $M$
into homogeneous components. Because the non-units in $\calA(M,M)$ form its maximal ideal,
at least one of the homogeneous components has to be a unit, i.e. an isomorphism. 
\end{proof}

\begin{lemma} \label{lem:ShiftedsAreNotIso}

Given a graded cover $(\wt{\calA}, v, \eps)$ of an artinian category
$\calA$ a non-trivial object $0\neq M \in \wt{\calA}$ is never isomorphic to
its shifted
versions $M[i]$ for $i \neq 0$.

\end{lemma}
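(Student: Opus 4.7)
The plan is to assume that $0\neq M\in \tilde{\calA}$ admits an isomorphism $M\cong M[i]$ with $i\neq 0$, without loss of generality $i>0$, and derive a contradiction. The central identification is that, by condition~(1) of Definition~\ref{def:zgradedversion}, the pair $(v,\eps)$ makes
\[
 E \;:=\; \operatorname{End}_{\calA}(vM) \;=\; \bigoplus_{j\in\ZZ} \tilde{\calA}(M,M[j])
\]
into a $\ZZ$-graded ring, and I would exploit the structure of this graded ring together with Fitting's lemma.

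First I reduce to the case that $M$ is simple in $\tilde{\calA}$. Since $\tilde{\calA}$ is itself artinian (by the remark preceding this lemma), $M$ admits a finite Jordan--H\"older series; the isomorphism $M\cong M[i]$ forces the multiset of simple subquotients to be preserved by the shift $[i]$, so $[i]$ permutes the finite set of isomorphism classes of simples occurring in $M$. This permutation has finite order, hence some simple subquotient $S$ satisfies $S\cong S[N]$ for some $N>0$; replacing $(M,i)$ by $(S,N)$, I may assume $M$ is simple. Under this assumption, Schur's lemma in $\tilde{\calA}$ makes $D:=\tilde{\calA}(M,M)$ a division ring, and each $\tilde{\calA}(M,M[j])$ is either $0$ or (noncanonically) a copy of $D$. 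The set $H:=\{j\in\ZZ : M\cong M[j]\}$ is a nonzero subgroup of $\ZZ$, hence equals $g\ZZ$ for some $g>0$. Fixing an isomorphism $t:M\to M[g]$ identifies $E$ with the skew Laurent polynomial ring $D[t^{\pm 1};\sigma]$, where $\sigma$ is the automorphism of $D$ given by conjugation by $t$. In particular $E$ is a domain, so has only the trivial idempotents, and hence $vM$ is indecomposable in $\calA$.

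Since $vM$ is of finite length and indecomposable, Fitting's lemma applied to $t-1\in E$ gives an alternative: either $(t-1)^N=0$ in $E$ for some $N\geq 1$, or $t-1$ is a unit of $E$. The nilpotent case is disposed of immediately: binomial expansion $(t-1)^N=\sum_{k=0}^N (-1)^{N-k}\binom{N}{k} t^k$ has degree-$0$ graded piece $(-1)^N\cdot 1 \in D$, which is nonzero. I expect the main obstacle to be the unit case, which I would handle as follows: for any nonzero $w=\sum_{k=L}^K d_k t^k\in E$ with $d_L,d_K\neq 0$ and $L\leq K$, the relation $td=\sigma(d)\,t$ gives
\[
 (t-1)\,w \;=\; \sigma(d_K)\, t^{K+1} \;+\; \sum_{k=L+1}^{K}\bigl(\sigma(d_{k-1})-d_k\bigr)\, t^k \;-\; d_L\,t^L,
\]
whose top- and bottom-degree graded pieces $\sigma(d_K)\,t^{K+1}$ and $-d_L\,t^L$ are both nonzero (using that $\sigma$ is an automorphism of the division ring $D$). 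For this to equal $1=t^0$ one would need $K+1=0=L$, which contradicts $L\leq K$. Hence no finite Laurent-polynomial inverse of $t-1$ exists, both Fitting alternatives fail, and the assumption $M\cong M[i]$ is untenable.
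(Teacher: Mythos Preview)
Your proof is correct and follows essentially the same route as the paper's: reduce to simple $M$, identify $\operatorname{End}_{\calA}(vM)$ with a skew Laurent polynomial ring $D[t^{\pm1};\sigma]$ over the division ring $D=\tilde{\calA}(M,M)$, deduce that $vM$ is indecomposable of finite length, and then invoke Fitting's lemma to reach a contradiction. The paper stops at the assertion that not every element of $D[t^{\pm1};\sigma]$ is a unit or nilpotent, whereas you make this explicit by exhibiting $t-1$ and checking both alternatives directly; you also spell out the reduction to simple $M$ via the permutation action of $[i]$ on the finite set of Jordan--H\"older factors, which the paper leaves to the reader. One minor simplification: since you already observed that $E$ is a domain, the nilpotent case is immediate without the binomial expansion.
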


\begin{proof}

It is enough to prove the statement for simple objects.  
From now on let $M \in \wt{A}$ be  simple. 
Suppose there is an isomorphism $M \overset{\sim}{\rightarrow} M[i]$ for some 
 $i \neq 0$.
Then the endomorphism ring of $vM \in \calA$ is given by twisted Laurent
series over 
a skew-field, more precisely 
$\calA(vM, vM)$ is of the form $K^{\sigma}[X, X^{-1}]$
where the skew-field $K = \wt{\calA}(M, M)$ is the endomorphism
ring of $M$ in $\wt{\calA}$, $\sigma: K \rightarrow K$ an automorphism of skew-fields and $cX = X\sigma(c)$ for all $c \in K$.
Obviously  $0$ and $1$ are 
the only idempotents in $K^{\sigma}[X,X^{-1}]$, so $vM \in \calA$
is an indecomposable object. 
On the other hand it has finite length by assumption. 
Thus, by a version of Fittings lemma, all elements in the endomorphism 
ring $\calA(vM, vM)$ have to be either units or nilpotent,
and this is just not the case. 
\end{proof}

\begin{lemma} \label{lem:SOL}
Given a graded cover $(\wt{\calA}, v, \eps)$ of an artinian category $\calA$, forgetting of the grading induces a bijection of sets
\[ (\mathrm{irr} \wt{\calA}) / \ZZ \overset{\sim}{\rightarrow} \mathrm{irr} \calA \]
between the isomorphism classes of simple objects in the graded cover modulo shift
and the isomorphism classes of simple objects in $\calA$. 

\end{lemma}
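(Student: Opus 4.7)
My proof will proceed in three steps: (I)~verify that $v$ sends simples of $\wt{\calA}$ to simples of $\calA$, so that $[L]\mapsto[vL]$ is well-defined; (II)~check injectivity of this map on iso-classes modulo shift; (III)~establish surjectivity. I expect step~(I) to be the main work.

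The heart of step~(I) is the following key claim: for $L \in \wt{\calA}$ simple, every non-zero morphism $f : vP \to vL$ with $P \in \wt{\calA}$ is surjective. To prove it, I decompose $f$ via the Hom-isomorphism of Definition~\ref{def:zgradedversion}(1) as $f = \sum_{i \in T}\phi_i \circ v(g_i)$, where $T\subseteq\ZZ$ is finite, the $g_i : P \to L[i]$ are non-zero morphisms in $\wt{\calA}$ for $i \in T$, and $\phi_i : vL[i] \xrightarrow{\sim} vL$ is the canonical isomorphism induced by iterating $\eps$. Since each $L[i]$ is simple, every $g_i$ with $i \in T$ is surjective; and by Lemma~\ref{lem:ShiftedsAreNotIso} the simples $L[i]$ for $i \in T$ are pairwise non-isomorphic, so $\bigoplus_{i\in T} L[i]$ is semi-simple with the property that every subobject is a sub-sum. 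This forces the combined morphism $(g_i)_i : P \to \bigoplus_{i\in T} L[i]$ to be surjective, since it projects surjectively onto every summand. Applying the exact functor $v$ and post-composing with the evident surjection $\bigoplus_i vL[i] \twoheadrightarrow vL$ built from the $\phi_i$ then exhibits $f$ as a composite of two epimorphisms.

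Simplicity of $vL$ follows at once: for a non-zero subobject $K \subseteq vL$, condition~(2) of Definition~\ref{def:zgradedversion} applied with $N = 0$ produces an epi $vP \sra K$, and composing with the inclusion $K \hookrightarrow vL$ gives a non-zero morphism $vP \to vL$ with image $K$, which is surjective by the key claim; hence $K = vL$. Step~(II) is then short: well-definedness of $[L] \mapsto [vL]$ is automatic from $\eps$, and injectivity follows from Lemma~\ref{lem:UniquenessOfLiftOfIndemcomp} applied to the indecomposable $vL_1\cong vL_2$, exhibiting $L_1$ and $L_2$ as two lifts of the same object and hence isomorphic up to a shift.

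For step~(III), given a simple $T \in \calA$ I use condition~(2) with $N=0$ to find an epimorphism $vP \sra T$. A composition series of $P$ in the (necessarily artinian) category $\wt{\calA}$ with simple subquotients $L_1,\dots,L_n$ becomes, under the exact functor $v$, a composition series of $vP$ with subquotients $vL_1,\dots,vL_n$, each simple by step~(I); Jordan--H\"older then identifies $T$ with some $vL_k$, supplying the lift. The main obstacle throughout is the key claim in step~(I): since images of sums can be strictly smaller than sums of individual images, a naive bound $\operatorname{im}(f) \subseteq \sum_i \operatorname{im}(\phi_i\circ v g_i)$ is useless, and the fix is to package the family $(g_i)$ into a single morphism targeting $\bigoplus L[i]$, where the non-isomorphism of shifts rigidifies the subobject lattice enough to force surjectivity.
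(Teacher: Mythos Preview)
Your proof is correct, and your key claim in step~(I) --- that every non-zero morphism $vP \to vL$ with $L$ simple is surjective --- is precisely the engine of the paper's proof as well, established by the same decomposition into homogeneous components and the same appeal to Lemma~\ref{lem:ShiftedsAreNotIso} to force $(g_i): P \to \bigoplus_i L[i]$ to be an epimorphism. The paper then deduces simplicity of $vL$ by the dual maneuver (every non-zero quotient of $vL$ is an isomorphism, via a cover of the kernel), whereas you argue on subobjects; these are equivalent.

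Where you genuinely diverge is step~(III). The paper establishes surjectivity by a minimal-length argument: among all $M \in \wt{\calA}$ with $vM \twoheadrightarrow T$, pick one of minimal length and show it must already be simple, since a non-trivial subobject or quotient would furnish a shorter cover. Your route --- take any such $vP \twoheadrightarrow T$, push a composition series of $P$ through the exact functor $v$, and invoke Jordan--H\"older to identify $T$ among the resulting simple subquotients $vL_k$ --- is more direct and exploits step~(I) a second time in a pleasant way. The paper's argument has the minor advantage of not relying on step~(I) for surjectivity, so its two halves are logically independent; yours has the advantage of being shorter and avoiding the minimality bookkeeping. For injectivity (step~II), the paper simply leaves it to the reader, and your appeal to Lemma~\ref{lem:UniquenessOfLiftOfIndemcomp} is exactly what is intended.
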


\begin{proof}

First we show that each epimorphism $vM \twoheadrightarrow L$ in $\calA$,
where $M \in \wt{\calA}$ 
{is simple} 
and $L \in \calA$ {is non-zero}, %are both simple objects
has to be an isomorphism.
By definition of a graded cover
 we find an object $N \in \wt{\calA}$ such that $vN$ maps epimorphically onto ker$(vM \twoheadrightarrow L)$, i.e. there is 
a resolution $vN \overset{\varphi}{\rightarrow} vM \twoheadrightarrow L$ of $L$ in $\calA$ by 
objects having a $\ZZ$-graded lift.
We can decompose $\varphi$ into homogeneous components $ \varphi = \sum_{i \in
  \ZZ}\varphi_{i}$ and each summand $\varphi_{{i}}: N \rightarrow M[i]$
has to be either trivial or an epimorphism in $\wt{\calA}$. 
{Using \ref{lem:ShiftedsAreNotIso}}, 
the non-trivial summands give an epimorphism 
$$(\varphi_{i}): N \twoheadrightarrow \bigoplus_{i \in \ZZ, \;\varphi_{i} \neq 0}M[i]$$
in $\wt{\calA}$.
{This has to stay an epimorphism when we forget the grading
and postcompose with the morphism 
$\bigoplus_{\varphi_{i} \neq 0}vM\twoheadrightarrow vM$ adding up the components.
So if $\varphi$ is non-zero, it is a surjection. Thus}
we have shown that the forgetting of the grading induces a map $(\mathrm{irr} \wt{\calA}) / 
\ZZ \rightarrow \mathrm{irr} \calA $.
To see that it is surjective, take $L \in \mathrm{irr}\calA$.
By our assumptions   there is an $M \in \wt{\calA}$ together with an
epimorphism 
$vM \twoheadrightarrow L$; consider the set of all objects in $\wt{\calA}$
which map, after forgetting the grading, epimorphically onto $L$ and choose 
among  them an object $M \in \wt{\calA}$ 
of minimal length. 
If $M$ is not simple, there is a non-trivial sub-object $K \subset M$
with non-trivial quotient and we get a short exact sequence
\[ 0 \rightarrow vK \rightarrow vM \rightarrow vC \rightarrow 0.\]
Then the restriction of $vM \twoheadrightarrow L$ to $vK$ must be trivial, otherwise it  contradicts the
minimal length assumption on $M$; therefore $vC$ has to map epimorphically onto $L$ 
again contradicting the minimal length assumption on $M$. We conclude that $M \in \wt{A}$ was already simple.    
The proof of injectivity is left to the reader. 
\end{proof}
\begin{prop} \label{prop:ProjectivesLift}

Projective  objects  do lift.

\end{prop}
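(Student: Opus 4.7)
Proof plan. The plan is to reduce to the case of an indecomposable projective $P$ and construct its lift using condition~(2) of Definition~\ref{def:zgradedversion} together with Krull--Schmidt decomposition in the artinian category $\tilde{\calA}$. Since any projective in the artinian $\calA$ is a finite direct sum of indecomposable projectives, and a direct sum of lifts is a lift of the direct sum, it suffices to treat indecomposables.

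Let $P$ be indecomposable projective with simple top $L$. By Lemma~\ref{lem:SOL} there is a simple $S \in \tilde{\calA}$ with $vS \cong L$. I would apply condition~(2) to the epi $P \twoheadrightarrow vS$ (with $M = P$, $N = S$) to obtain $\tilde{Q} \in \tilde{\calA}$, an epi $f\colon v\tilde{Q} \twoheadrightarrow P$, and a morphism $g\colon \tilde{Q} \to S$ whose image under $v$ equals the composite $v\tilde{Q} \twoheadrightarrow P \twoheadrightarrow vS$. Faithful exactness of $v$ makes $g$ an epi as well.

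Next I would decompose $\tilde{Q} = \bigoplus_j \tilde{Q}_j$ via Krull--Schmidt. Projectivity of $P$ provides a section $s$ of $f$, and $\mathrm{id}_P = \sum_j f_j s_j$ in the local endomorphism ring $\mathrm{End}_\calA(P)$ forces some summand $f_{j_0} s_{j_0}$ to be an isomorphism. Hence $v\tilde{Q}_{j_0}$ has $P$ as a direct summand, and the $j_0$-component of $g$ gives a surjection $\tilde{Q}_{j_0} \twoheadrightarrow S$ (again by faithful exactness of $v$). Replacing $\tilde{Q}$ by $\tilde{Q}_{j_0}$, we may assume $\tilde{Q}$ is indecomposable in $\tilde{\calA}$.

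The remaining step, which I expect to be the main obstacle, is to show $v\tilde{Q} \cong P$, i.e.\ to rule out $v\tilde{Q} = P \oplus K$ with $K \neq 0$. My approach is to analyse the graded endomorphism ring $R = \bigoplus_i \tilde{\calA}(\tilde{Q}, \tilde{Q}[i]) \cong \mathrm{End}_\calA(v\tilde{Q})$: its degree-zero component $R_0 = \mathrm{End}_{\tilde{\calA}}(\tilde{Q})$ is local, and by Lemma~\ref{lem:ShiftedsAreNotIso} no morphism $\tilde{Q} \to \tilde{Q}[i]$ with $i \neq 0$ can be an isomorphism (else $\tilde{Q}$ would be a summand of the indecomposable $\tilde{Q}[i]$, forcing $\tilde{Q} \cong \tilde{Q}[i]$). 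Consequently $R_i \cdot R_{-i}$ lands in the maximal ideal $\frm$ of $R_0$ for every $i \neq 0$, so $J := \frm \oplus \bigoplus_{i\neq 0} R_i$ is a two-sided ideal of $R$ with $R/J \cong R_0/\frm$ a skew-field. The hard part is to conclude from the artinian hypothesis that $R$ has only the trivial idempotents $0$ and $1$; the key lemma is that a product of at least two homogeneous factors of nonzero degrees whose total degree vanishes factors as an element of $R_{i_1}\cdot R_{-i_1}\subseteq\frm$, which together with idempotent lifting mod $J(R)$ forces $R/J(R)=R/J$ to be the skew-field. This gives indecomposability of $v\tilde{Q}$, so $K = 0$ and $\tilde{Q}$ is the desired lift of $P$.
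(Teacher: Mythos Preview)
Your reduction to an indecomposable $\tilde Q\in\tilde{\calA}$ with a split surjection $v\tilde Q\twoheadrightarrow P$ and a surjection $\tilde Q\twoheadrightarrow S$ is fine. The gap is in the ``hard part'' you flag yourself. You correctly show that $R_0=\op{End}_{\tilde\calA}(\tilde Q)$ is local and that $J:=\frm\oplus\bigoplus_{i\neq 0}R_i$ is a two-sided ideal with $R/J$ a skew-field. But the sentence ``which together with idempotent lifting mod $J(R)$ forces $R/J(R)=R/J$'' is not an argument: your key lemma only says $J$ is an ideal, and idempotent lifting only transfers idempotents between $R$ and $R/J(R)$; neither gives $J\subseteq J(R)$. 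What is actually needed is that the Jacobson radical of a $\DZ$-graded ring is a graded ideal (Bergman), combined with the observation that a $\DZ$-graded division ring is concentrated in degree zero. Together with semi-primariness of $R$ (via Harada--Sai, since $v\tilde Q$ has finite length) this does force $R/J(R)$ to be a division ring, hence $R$ local. So your conclusion is correct, but it rests on a nontrivial theorem you have neither cited nor proved, and your ``key lemma'' about products does not by itself get you there.

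The paper avoids this difficulty by a different device: instead of passing to an indecomposable summand, it chooses $M\in\tilde{\calA}$ of \emph{minimal length} among those with $vM\twoheadrightarrow P$ and $M\to L$ lifting the composite. Assuming $vM=A\oplus B$ with $B\neq 0$, it takes a simple quotient $vE$ of $B$, decomposes the resulting map $vM\to vE$ into homogeneous components $\lambda_i\colon M\to E[i]$, and observes that for each nonzero $\lambda_i$ the kernel $\ker\lambda_i\subsetneq M$ still surjects onto $L$ (hence $v(\ker\lambda_i)$ onto $P$), contradicting minimality. This argument only uses the specific map to the fixed simple $L$ and the minimality hypothesis; it never needs the general statement that $v$ preserves indecomposability.
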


\begin{remark}
Using the stability of graded covers by passing to the opposed categories 
\ref{ACO}, we easily deduce that injective objects do lift as well.   
\end{remark}
\begin{proof}
It is enough to prove the statement for indecomposable projective objects.  
 Let $P$ be one of those.
 It is known that $P$ admits a unique simple quotient, 
which in turn by \ref{lem:SOL} admits a graded lift, so that we can write 
\[P \twoheadrightarrow vL \]  
with $L\in\tilde{A}$  a simple object. 
By our definition of a graded cover we can find an epimorphism
$vM\sra P$ such that the composition $vM\sra P\sra vL$ 
comes from a morphism $M\ra L$. 
Assume now in addition, that $M$ has 
minimal length for such a situation. 
If we can show that $v{M}$ is indecomposable we are done, 
because $P$ is projective and
 thus the morphism $v{M} \twoheadrightarrow P$ splits. 
Suppose $v{M} \cong A \oplus B$. Then one summand, say $A$,
 has to map epimorphically onto $L$. If $B$ is not zero, 
then $B$ also has a simple quotient $\pi: B \twoheadrightarrow vE$
and  we get an epimorphism  $\psi: v{M} \twoheadrightarrow vL \oplus vE$. 
We can
decompose the composition $$\lambda=\operatorname{pr}_{2}\circ \psi: v {M} 
\rightarrow vL \oplus vE \twoheadrightarrow vE $$ into homogeneous components
$\sum_{i \in \ZZ} \lambda_{i}$. 
If there was a non-zero $\lambda_i:  {M}\rightarrow {E}[i]$ 
with ${E}[i]\not\cong {L}$, then
$\ker \lambda_i$ would also surject onto ${L}$
and $v(\ker \lambda_i)$ would  surject onto $v{L}$ and thus onto
$P$, contradicting our assumption of minimal length. 
So we may assume our epimorphism $\psi$ is obtained by forgetting the grading 
from an epimorphism 
$(\td\lambda,\td\varphi): M\rightarrow  L\oplus L$. 
But then again $v(\ker\td\lambda)$ will still surject onto $vL$,
contradicting our assumption of minimal length. 
Thus $v M$ is indecomposable and the split epimorphism 
$v M\twoheadrightarrow P$ has to be an isomorphism. 
\end{proof}

\begin{cor} \label{cor:IndecProjLift}
Let $(\wt{\calA}, v, \eps)$ be a graded cover  of an artinian category $\calA$ and
suppose $\calA$ has enough projective objects. Then
forgetting  the grading induces a bijection of sets
\[ (\mathrm{inProj} \wt{\calA}) / \ZZ \overset{\sim}{\rightarrow} \mathrm{inProj} \calA \]
between the isomorphism classes of indecomposable projective objects in the graded cover modulo shift
and the isomorphism classes of indecomposable projective objects in $\calA$. 

\end{cor}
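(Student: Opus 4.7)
The plan is to show that the forgetful functor induces a well-defined bijection
\[ v_* \colon (\mathrm{inProj}\wt{\calA})/\ZZ \longrightarrow \mathrm{inProj}\calA, \qquad [\tilde P] \mapsto [v\tilde P]. \]
This breaks into three claims: that $v\tilde P$ is indecomposable projective in $\calA$ whenever $\tilde P$ is indecomposable projective in $\wt{\calA}$ (well-definedness); that $v_*$ is surjective; and that $v_*$ is injective. Surjectivity will follow almost for free from Proposition \ref{prop:ProjectivesLift}, injectivity almost for free from Lemma \ref{lem:UniquenessOfLiftOfIndemcomp}, while well-definedness will be the main obstacle.

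For surjectivity, given $P \in \mathrm{inProj}\calA$, Proposition \ref{prop:ProjectivesLift} produces $\tilde P \in \wt{\calA}$ with $v\tilde P \cong P$. This $\tilde P$ is automatically indecomposable, for any splitting $\tilde P = \tilde A \oplus \tilde B$ would yield $P = v\tilde A \oplus v\tilde B$, and $v$ reflects zero objects: if $v\tilde X = 0$ then $\bigoplus_i \wt{\calA}(\tilde X, \tilde X[i]) = 0$ by condition (1), so in particular $1_{\tilde X} = 0$ and $\tilde X = 0$. I would further check that $\tilde P$ is projective in $\wt{\calA}$ by a degree-zero component argument: for an epimorphism $\tilde\alpha \colon \tilde N \twoheadrightarrow \tilde N'$ and a morphism $\tilde f \colon \tilde P \to \tilde N'$ in $\wt{\calA}$, I apply $v$, lift $v\tilde f$ along $v\tilde\alpha$ using projectivity of $P$, decompose the resulting $g\in \calA(v\tilde P, v\tilde N) = \bigoplus_i \wt{\calA}(\tilde P, \tilde N[i])$ into homogeneous pieces $g=\sum g_i$, and take the degree-zero component $g_0$ as the required lift, noting that composition with the degree-zero map $\tilde\alpha$ preserves the grading and $v\tilde f$ is purely of degree zero. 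For injectivity, granted well-definedness, any two indecomposable projectives $\tilde P, \tilde P'$ in $\wt{\calA}$ with $v\tilde P \cong v\tilde P'$ are both lifts of the same indecomposable object in $\calA$, so Lemma \ref{lem:UniquenessOfLiftOfIndemcomp} yields $\tilde P \cong \tilde P'[i]$ for some $i \in \ZZ$.

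The main obstacle is well-definedness, which splits into projectivity and indecomposability of $v\tilde P$. Projectivity can be extracted dually from condition (2): given $\alpha \colon N \twoheadrightarrow N'$ and $f\colon v\tilde P \to N'$ in $\calA$, form the pullback $X = N \times_{N'} v\tilde P$ with its canonical epimorphism onto $v\tilde P$, apply condition (2) to produce $\tilde\beta \colon \tilde Y \to \tilde P$ in $\wt{\calA}$ whose image $v\tilde\beta$ factors as $v\tilde Y \twoheadrightarrow X \twoheadrightarrow v\tilde P$; the map $\tilde\beta$ is then itself an epi (its $v$-image is an epi and $v$ reflects zero), it splits by projectivity of $\tilde P$, and composing the splitting with $X \to N$ produces a lift of $f$ along $\alpha$. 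Indecomposability is thornier, and I would analyse the graded endomorphism ring
\[ R \,=\, \bigoplus_{i\in\ZZ} \wt{\calA}(\tilde P, \tilde P[i]) \;\cong\; \mathrm{End}_\calA(v\tilde P), \]
whose degree-zero part $R_0 = \wt{\calA}(\tilde P, \tilde P)$ is local since $\tilde P$ is indecomposable in the artinian category $\wt{\calA}$. By Lemma \ref{lem:ShiftedsAreNotIso} no $\tilde P[i]$ with $i \neq 0$ is isomorphic to $\tilde P$, so every composition $R_{-i}\cdot R_i \subseteq R_0$ factors through a non-isomorphic indecomposable and therefore lies in $\mathrm{rad}(R_0)$. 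For any idempotent $e = \sum e_i \in R$ the degree-zero part of $e^2 = e$ reads
\[ e_0(1-e_0) \,=\, -\sum_{i \neq 0} e_{-i}[i]\circ e_i \;\in\; \mathrm{rad}(R_0), \]
so $\bar e_0 \in \{0,1\}$ in the division ring $R_0/\mathrm{rad}(R_0)$. Since $v\tilde P$ has finite length, $R$ is itself artinian with bounded grading, so every homogeneous element of non-zero degree is nilpotent; a standard idempotent-lifting argument in the artinian ring $R$ then forces $e \in \{0, 1\}$. This idempotent-lifting step is where the real work lies; the rest is formal manipulation.
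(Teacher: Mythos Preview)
Your direct approach---checking that $v_*$ is well-defined, surjective, and injective---differs from the paper's quick reduction to Lemma~\ref{lem:SOL} via the projective-cover bijection between indecomposable projectives and simples on both sides. Surjectivity, injectivity, and your pullback argument for projectivity of $v\tilde P$ are all correct. The gap is in the indecomposability of $v\tilde P$. The claim that $R = \mathrm{End}_{\calA}(v\tilde P)$ is artinian does not follow merely from $v\tilde P$ having finite length (such endomorphism rings are only semiprimary in general), and even granting the bounded grading---which is true for the separate reason that a nonzero $\tilde P \to \tilde P[n]$ forces the head $\tilde L$ of $\tilde P$ to appear as a composition factor of $\tilde P[n]$, so $\tilde L[-n]$ is among the finitely many composition factors of $\tilde P$---your final step does not close. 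You have placed $e$ (or $1-e$) in the two-sided ideal $J = \mathrm{rad}(R_0)\oplus\bigoplus_{i\neq 0}R_i$, but idempotent lifting only controls $e$ modulo $\mathrm{rad}(R)$, not modulo $J$; to deduce $e=0$ you would need $J$ to be nil, and nilpotence of each homogeneous piece does not make an inhomogeneous element of $J$ nilpotent. This is the step you flag as ``where the real work lies'', and indeed it is not a standard lifting argument.

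The clean fix is the head argument implicit in the paper's proof. Since $\tilde P$ is indecomposable projective in the artinian category $\wt{\calA}$, its head $\tilde L$ is simple, and $v\tilde L$ is simple by \ref{lem:SOL}. Any simple quotient $v\tilde P \twoheadrightarrow v\tilde L'$ has, upon homogeneous decomposition, some nonzero component $\tilde P \to \tilde L'[i]$, necessarily an epimorphism onto the simple target, forcing $\tilde L'[i]\cong\tilde L$ and hence $v\tilde L'\cong v\tilde L$. If $\mathrm{head}(v\tilde P)$ contained $v\tilde L$ with multiplicity at least $2$, the same decomposition (every nonzero component now forced into degree $0$ by \ref{lem:ShiftedsAreNotIso}) would assemble to an epimorphism $\tilde P \twoheadrightarrow \tilde L\oplus\tilde L$ in $\wt{\calA}$, contradicting the simplicity of $\mathrm{head}(\tilde P)$. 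Hence $\mathrm{head}(v\tilde P)=v\tilde L$ is simple, so the projective object $v\tilde P$ is indecomposable. This is precisely the mechanism behind the paper's two-line reduction, and it replaces your endomorphism-ring analysis entirely.
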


\begin{proof}
It is well known that if an artinian category $\calA$ has enough projectives, 
taking the projective cover
gives a bijection between the set of isomorphism classes of simple 
objects in $\calA$ and the set of isomorphism classes of indecomposable
projective objects in $\calA$. 
So both sides are in bijection to the corresponding 
sets of isomorphism classes of simple objects.
Since each projective admits a lift by  \ref{prop:ProjectivesLift},
and since such a lift clearly is again projective,
the statement follows from the existence and unicity statement
about lifts of simple objects \ref{lem:SOL}.  
\end{proof}
\begin{cor} \label{ex:ClassifyingLiftsOfA}
Let $A$ be a left-artinian ring  with a $\ZZ$-grading. Then:
\begin{enumerate}
\item There exists a complete system 
of primitive pairwise orthogonal
  idempotents in $A$ such that  all its elements  are
homogeneous;
\item If $(1_x)_{x\in I}$ is such a system,
$(\wt{M}, \varphi)$ a graded lift  of  $A$ considered as a
  left $A$-module, and 
$\tilde{A}$ the lift of $A$ 
given by the $\ZZ$-grading, then there exists a map $n:I\ra\DZ$ 
along with an isomorphism of graded left $\td A$-modules
  $$\wt{M} \overset{\sim}{\rightarrow} \bigoplus_{x \in
    I}\tilde{A}\Idp{x}[n(x)]$$ 
\end{enumerate}
\end{cor}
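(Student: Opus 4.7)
The plan is to obtain both parts from the decomposition of the regular graded module $\tilde A$ into graded indecomposable projectives, combined with the uniqueness of lifts of indecomposables \ref{lem:UniquenessOfLiftOfIndemcomp} and the bijection for indecomposable projectives \ref{cor:IndecProjLift}.

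For part (1), I would view $\tilde A$ as an object of $\tilde A\text{-Modf}^\ZZ$. Since this graded cover is artinian and $v\tilde A = A$ has finite length, $\tilde A$ itself has finite length as a graded module and hence decomposes as $\tilde A = \bigoplus_{x\in I}\tilde P_x$ into a finite direct sum of graded indecomposable summands. Each $\tilde P_x$ is a graded direct summand of $\tilde A$, hence a graded projective, and it is indecomposable by construction; by the argument used in the proof of \ref{cor:IndecProjLift} (applied via Krull--Schmidt in $A\text{-Modf}$ together with the existence of lifts of indecomposable projectives), forgetting the grading sends a graded indecomposable projective to an indecomposable projective, so each $v\tilde P_x = A1_x$ is indecomposable over $A$ and the $1_x$ are primitive pairwise orthogonal idempotents summing to $1$ in $A$. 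The key point for homogeneity: the decomposition $\tilde A = \bigoplus \tilde P_x$ lives in the category of $\ZZ$-graded modules with degree-zero morphisms, so each projection $\pi_x: \tilde A \twoheadrightarrow \tilde P_x$ is homogeneous of degree $0$; since $1\in\tilde A$ is homogeneous of degree $0$, so is its image $1_x = \pi_x(1)$.

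For part (2), let $(\tilde M,\varphi)$ be a graded lift of $A$ as a left $A$-module. Because $\tilde A\text{-Modf}^\ZZ$ is artinian, I decompose $\tilde M = \bigoplus_{y\in J} \tilde M_y$ into graded indecomposables. Forgetting the grading and applying $\varphi$ gives an isomorphism $\bigoplus_{y\in J} v\tilde M_y \overset{\sim}{\rightarrow} \bigoplus_{x\in I} A1_x$; since each $v\tilde M_y$ is indecomposable in $A\text{-Modf}$ (same argument as above, as $\tilde M_y$ is a direct summand of the projective module $\tilde M$), the Krull--Schmidt theorem in the artinian category $A\text{-Modf}$ yields a bijection $y\mapsto x(y)$ from $J$ onto $I$ together with isomorphisms $v\tilde M_y \cong A1_{x(y)}$. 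Applying the uniqueness of lifts of indecomposables \ref{lem:UniquenessOfLiftOfIndemcomp} to each summand (using that $\tilde A 1_{x}$ is a graded lift of $A1_x$), we obtain $\tilde M_y \cong \tilde A 1_{x(y)}[m_y]$ in $\tilde A\text{-Modf}^\ZZ$ for some $m_y\in\ZZ$, and reindexing by $x$ via the bijection gives the asserted isomorphism with $n(x):=m_{y(x)}$.

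The only genuinely non-routine step is verifying that $v$ sends graded indecomposable projectives to indecomposable projectives, since the earlier lemmas give the reverse lifting statement. The argument sketched above (decompose $v\tilde P$ into indecomposables in $A$, lift each summand, and invoke Krull--Schmidt together with the uniqueness of lifts of indecomposables to conclude that only one summand can appear) is the key technical point; everything else is a clean application of the already-established results on lifts and of Krull--Schmidt in the two artinian categories involved.
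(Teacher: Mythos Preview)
Your proposal is correct and follows essentially the same route as the paper: decompose $\tilde A$ (respectively $\tilde M$) into graded indecomposable projectives, invoke \ref{cor:IndecProjLift} to see these stay indecomposable after forgetting the grading, and then use Krull--Schmidt together with \ref{lem:UniquenessOfLiftOfIndemcomp} to match summands up to shift. The only cosmetic difference is in part (1): the paper obtains the homogeneity of the $1_x$ by first observing that every graded left $A$-module endomorphism $\tilde A\to\tilde A[i]$ is right multiplication by a degree-$i$ element, whereas you argue directly that $1_x=\pi_x(1)$ with $\pi_x$ degree-preserving; both are fine.
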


\begin{proof}
It is easy to see that every homomorphism
$\td A\rightarrow \td A[i]$ of graded left $A$-Modules is the 
multiplication from the right with an element of $A$ homogeneous of degree $i$. 
There is a direct sum decomposition $\tilde{A} \cong \bigoplus_{x \in I} P_x$
into
indecomposable objects in $\wt{A}\operatorname{-Modf}^{\ZZ}$, and its summands are
projective. 
The corresponding  idempotent endomorphisms of $\td A$ are 
right multiplications with some idempotents $1_x\in A$, homogeneous of degree
zero. Forgetting the grading on the $P_x$ we get 
indecomposable projective $A$-modules by \ref{cor:IndecProjLift},
and thus our family $( \Idp{x} )_{x \in I}$ is
a full set of primitive orthogonal idempotents in $A$.
For the second statement
let $\td M=\bigoplus_{y\in J} Q_y$ be a direct sum decomposition
into
indecomposable objects in $\wt{A}\operatorname{-Modf}^{\ZZ}$. Again its summands are
projective, so by  \ref{cor:IndecProjLift}
they stay indecomposable when we forget the grading. 
So there is a bijection $\sigma:I\overset{\sim}{\rightarrow} J$ with
$vP_x\cong vQ_{\sigma(x)}$ and by the uniqueness of lifts
\ref{lem:UniquenessOfLiftOfIndemcomp} we find  $P_x[n(x)]\cong Q_{\sigma(x)}$ 
for suitable $n(x)\in \ZZ$. 
\end{proof}
\section{Alternative definition of graded covers}
\begin{remarkl}
  The following proposition establishes the relation to the concept of a
  $\DZ$-grading as  introduced in \cite{BGSo}. It also ensures our concept
of graded cover to be stable upon passing to the opposed categories.
Apart from that, this section is not relevant for the rest of this article. 
\end{remarkl}

\begin{prop}
  Let $\mathcal A$ be an artinian category. 
A triple\label{ACO} 
  $(\wt{\mathcal A}, v, \eps)$ consisting of an abelian category $\wt{\mathcal A}$
  equipped with a strict automorphism $[1]$, an exact
  functor $v: \wt{\mathcal A} \rightarrow \mathcal A$ and an isotransformation of functors
  $\eps: v \overset{\sim}{\Rightarrow} v[1]$ is a graded cover of
$\mathcal A$ if and only if the following hold:
  \begin{enumerate}
  \item For all $ M, N \in \wt{\mathcal A}$ the pair $(v, \eps)$ induces 
    isomorphisms of extensions $\bigoplus_{i \in \mathbb{Z}}
    \op{Ext}^j_{\wt{\mathcal A}}(M, N[i]) \overset{\sim}{\rightarrow} \op{Ext}^j_{{\mathcal A}}(vM, vN)$;
  \item Every irreducible object in $  \mathcal A $  admits a graded lift.
  \end{enumerate}
\end{prop}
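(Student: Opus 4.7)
The plan is to establish both implications separately. Two ingredients are immediate: condition (1') with $j=0$ coincides with the Hom-isomorphism in (1), and condition (2') follows from Lemma~\ref{lem:SOL} whenever we already have a graded cover. What remains is to show that (1), (2) imply (1') for $j \geq 1$, and conversely that (1'), (2') imply (2). The main obstacle I anticipate is the injectivity of the degree-one Ext comparison map in the forward direction, which requires extracting a graded retraction from a non-graded one; extending this argument to higher $j$ requires additional bookkeeping along Yoneda representatives.

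For the forward direction with $j=1$, starting from an extension $0 \to vN \to X \to vM \to 0$ in $\scA$, I would apply (2) to the epimorphism $X \twoheadrightarrow vM$ to produce $P \in \wt{\scA}$ and an epimorphism $vP \twoheadrightarrow X$ whose composition to $vM$ lifts to a morphism $P \to M$ in $\wt{\scA}$; setting $Q \pdef \ker(P \to M)$, one verifies that $X$ is the pushout of $0 \to vQ \to vP \to vM \to 0$ along the restricted map $vQ \to vN$. Decomposing this last map via~(1) into homogeneous components $\alpha_i \colon Q \to N[i]$ exhibits the class $[X]$ as the image of $\sum_i[\alpha_{i*}P] \in \bigoplus_i \op{Ext}^1_{\wt{\scA}}(M, N[i])$, proving surjectivity. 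For injectivity, I would assemble a vanishing family $(\eta_i)$ into a single extension $0 \to \bigoplus_i N[i] \to E \to M \to 0$ in $\wt{\scA}$, observe that its pushout $vE'$ along the sum map $\bigoplus_i vN[i] \to vN$ is split in $\scA$ by hypothesis, and lift the composition of $vE \to vE'$ with a retraction to homogeneous components $\beta_j \colon E \to N[j]$ via~(1). A short computation using the sum-map constraint forces $\beta_j|_{N[i]}$ to be the identity when $j=i$ and to vanish otherwise, yielding a graded retraction and hence splitting $E$. For $j \geq 2$ the same pattern iterates term-by-term.

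For the reverse direction, to derive (2) for $\pi \colon M \twoheadrightarrow vN$ with $K \pdef \ker \pi$, I would induct on the length of $K$. The base case $K = 0$ is trivial. When $K$ is simple, (2') supplies a simple $S \in \wt{\scA}$ with $vS \cong K$, so the class $[M] \in \op{Ext}^1_{\scA}(vN, vS)$ decomposes via~(1') in degree one into a finite family $\eta_i \in \op{Ext}^1_{\wt{\scA}}(N, S[i])$; the corresponding extension $0 \to \bigoplus_i S[i] \to E \to N \to 0$ in $\wt{\scA}$, pushed forward along the sum map $\bigoplus_i vS[i] \to vS$ obtained from the $\epsilon^{-i}$, recovers $M$, so $P \pdef E$ does the job. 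For longer $K$, pick a simple subobject $K_0 \subset K$ and apply the inductive hypothesis to $M/K_0 \twoheadrightarrow vN$ (whose kernel $K/K_0$ has strictly smaller length) to obtain $P_0 \in \wt{\scA}$ with $vP_0 \twoheadrightarrow M/K_0$ and composition to $vN$ coming from $\wt{\scA}$; then form the pullback $\wt M \pdef M \times_{M/K_0} vP_0$. The projection $\wt M \twoheadrightarrow M$ is epic as the pullback of an epimorphism, while $\wt M \twoheadrightarrow vP_0$ has simple kernel $\cong K_0$, so the simple-kernel case applied to this last epimorphism yields $P \in \wt{\scA}$ with $vP \twoheadrightarrow \wt M$ and composition to $vP_0$ coming from $\wt{\scA}$; composing further with the lifted map $P_0 \to N$ completes the inductive step.
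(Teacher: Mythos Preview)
Your reverse implication (deriving condition~(2) from the Ext-isomorphisms and liftability of simples) is essentially identical to the paper's: both reduce by pullback and induction to the case of a simple kernel, lift that kernel, decompose the resulting $\op{Ext}^1$-class into homogeneous pieces, and realize $M$ as the pushout of the graded extension along the sum map. No issues there.

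Your forward implication takes a genuinely different route. The paper does not argue degree by degree; instead it invokes the description
\[
\op{Ext}^j_{\mathcal A}(vM,vN)\;=\;\varinjlim_{Q}\op{Hot}^j_{\mathcal A}(Q,vN)
\]
over all resolutions $Q\to vM$, and observes that condition~(2) of the definition, iterated, makes the resolutions of the form $vP\to vM$ with $P\to M$ a resolution in $\wt{\mathcal A}$ cofinal in this system. This handles all $j$ simultaneously in one line. Your direct Yoneda argument for $j=1$ is correct and pleasantly explicit (the injectivity step via lifting a retraction to homogeneous components is a nice touch), but the phrase ``the same pattern iterates term-by-term'' for $j\geq 2$ is not yet a proof. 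Surjectivity does iterate along the lines you suggest (resolve $X_j\twoheadrightarrow vM$, pull back, resolve the next stage, and so on), but injectivity for higher Yoneda classes requires controlling the equivalence relation on $j$-fold extensions, which is generated by maps of exact sequences and does not reduce as cleanly to a single retraction. It can be done, but it is real work; the paper's cofinality argument sidesteps all of this. If you want to keep your hands-on approach, either spell out the higher-$j$ injectivity carefully or, more economically, combine your $j=0,1$ isomorphisms with a dimension-shifting or $\delta$-functor argument to propagate upward.
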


\begin{proof}
  Let us first show that every graded cover has these two properties.
For the second property, this follows from \ref{lem:SOL}.
For the first property, we use
$$\op{Ext}^j_{{\mathcal A}}(vM,vN)=
\varinjlim_Q\op{Hot}^j_{{\mathcal A}}(Q,vN)$$
where $Q$ runs over the system of all resolutions $Q\ra vM$ of $vM$. 
Our condition (2) on a graded cover ensures that if we take all
resolutions $P\ra M$ in $\tilde{\mathcal A}$, the resolutions 
$vP\ra vM$ will be cofinal in the system of all resolutions of $vM$ and
thus give the same limit. The claim follows.
Now let us show to the contrary that our two properties ensure both
conditions of the definition of a graded cover \ref{def:zgradedversion}.
The first condition is obvious. To show the second condition, we 
may use pull-backs and induction to restrict to the case
$\op{ker}(M\sra vN)$ is simple. Then by assumption
this kernel admits a graded lift, so we arrive at a short exact sequence
$$vL\hra M\sra vN$$
Now use the isomorphism
$\bigoplus_{i \in \mathbb{Z}}
    \op{Ext}^1_{\wt{\mathcal A}}(N, L[i]) \overset{\sim}{\rightarrow}
    \op{Ext}^1_{{\mathcal A}}(vN, vL)$
to write the class $e$ of the above extension as a finite sum of homogeneous
components
$e=\sum_{i=a}^b e_i$. We then get a commutative diagram
$$\begin{xy}
\xymatrix{\bigoplus_{i=a}^b L[i] \ar@^{(->}[r] \ar@^{=}[d]           &
  \bigoplus_{i=a}^b E_i \ar@{->>}[r]      & \bigoplus_{i=a}^b N  \\
          \bigoplus_{i=a}^b L[i]  \ar@^{(->}[r]          & P
          \ar@{->>}[r] \ar[u]     & N  \ar[u]_{\Delta}}
\end{xy}$$
with a right pullback square and by forgetting the grading another
  commutative diagram
$$\begin{xy}
\xymatrix{ \bigoplus_{i=a}^b vL  \ar@^{(->}[r] \ar[d]^{\sum}          & vP
          \ar@{->>}[r] \ar[d]     & vN \ar@^{=}[d] \\
          vL   \ar@^{(->}[r]            
&              M     \ar@{->>}[r]       &  vN
          }
\end{xy}
$$
with a left pushout square. This finishes the proof. 
\end{proof}

\section{Lifting functors to $\DZ$-graded covers} 

\begin{defi}
 A  {\bf  $\ZZ$-category} is a category  $\wt{\calA}$ 
together with a strict 
autoequivalence $[1]$ which we call ``shift of grading''.  
A \textbf{$\ZZ$-Functor} between $\ZZ$-categories is a 
pair $(F,\epsilon)$ consisting of a functor $F$ between the
  underlying categories and an isotransformation  $\epsilon:F[1] 
\stackrel{\sim}{\RA} [1]F$. 
\end{defi}

% Given in addition 
% another $\ZZ$-functor $G$, a \textbf{$\ZZ$-transformation} from $F$ to $G$
%   is a transformation of functors $u: F \Rightarrow G$ such that $u[1] =
%   [1]u$.

\begin{defi} \label{def:ZLiftOfFunctor}

Let $\calA$ and $\calB$ be  artinian categories, $F: \calA \rightarrow \calB$
an additive functor
 and let
$(\wt{\calA}, v, \eps)$ and $(\wt{\calB},  w , \eta)$ be graded covers of
$\calA$ and $\calB$
respectively. 
 A $\ZZ$\textbf{-graded lift of $F$} is a triple $(\wt{F},  \pi,\epsilon)$, 
where 
 $\tilde F:\tilde \calA \rightarrow \tilde \calB$ 
is an additive $\ZZ$-functor and 
 $ \pi:  w   \wt{F} \overset{\sim}{\Rightarrow} F  v$ 
and $\epsilon :[1] \tilde{F} \overset{\sim}{\Rightarrow} 
 \tilde{F}[1] $ are isotransformations of
 functors, such that the following diagram of isotransformations of functors commutes:
 \[
  \begin{xy}
    \xymatrix{	w [1] \tilde{F}  \ar@{=>}[d]^\wr_{\eta}
      \ar@{=>}[r]^\epsilon_{\sim}
	&
      w \tilde{F} [1] \ar@{=>} [r]^{ \pi}_{\sim} 		& F v [1] \ar@{=>}[d]^\epsilon_\wr  \\
      w \tilde{F} \ar@{=>}[rr]^{ \pi}_{\sim} & & F v }
  \end{xy}
  \]
\end{defi}

\begin{remarkl}
  By definition, a cover-equivalence as defined in \ref{qez} 
between two graded covers of a given
  artinian category 
is a 
graded lift of the identity functor in the sense of \ref{def:ZLiftOfFunctor}.
\end{remarkl}
\begin{remarkl}
  Take two $\ZZ$-graded left-artinian rings $\wt{A}$ and
  $\wt{B}$ and in addition a $B$-$A$-bimodule $X$ of finite length as left
  $B$-module.\label{ggr} Then obviously 
the functor $F=X\otimes_A: A\op{-Modf}\ra
  B\op{-Modf}$ admits a graded lift $\tilde F: \tilde A\op{-Modf}^\DZ\ra
  \tilde B\op{-Modf}^\DZ$ if and only if $X$ admits a $\DZ$-grading making it
  into a graded $\wt{B}$-$\wt{A}$-bimodule $\tilde X$. 
\end{remarkl}

\section{Comparing graded covers of module categories}

\begin{prop}
  Let $A$ be a left artinian ring
and $(\tilde{\mathcal A},v,\epsilon)$ a graded cover of $A\op{-Modf}$.
Then there exists a $\DZ$-grading   $\, \wt{} \,$ on $A$
such that $\tilde A\op{-Modf}^\DZ$ is cover-equivalent to 
$\mathcal A$.\label{grCO}  
\end{prop}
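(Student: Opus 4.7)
The plan is to apply graded Morita theory to a projective generator obtained as a lift of the regular left module $A$.

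\textbf{Extracting the grading.} By Proposition~\ref{prop:ProjectivesLift}, applied to each indecomposable summand of $A$ (viewed as a left module over itself) and followed by taking the direct sum, there is a graded lift $\tilde{P} \in \tilde{\mathcal A}$, that is, an isomorphism $\varphi : v\tilde{P} \stackrel{\sim}{\rightarrow} A$ of left $A$-modules. Specialising the Hom-isomorphism of Definition~\ref{def:zgradedversion}(1) to the pair $(\tilde{P}, \tilde{P})$ and transporting via $\varphi$ yields
\[ A^{\mathrm{op}} \;=\; \operatorname{End}_A(A) \;\cong\; \bigoplus_{i \in \ZZ} \tilde{\mathcal A}(\tilde{P}, \tilde{P}[i]), \]
which endows $A$ with a $\ZZ$-grading; denote the resulting graded ring by $\tilde{A}$.

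\textbf{The candidate cover-equivalence.} I would define $F : \tilde{\mathcal A} \to \tilde{A}\operatorname{-Modf}^{\ZZ}$ by
\[ F(\tilde{M}) \;:=\; \bigoplus_{i \in \ZZ} \tilde{\mathcal A}(\tilde{P}, \tilde{M}[i]), \]
with the graded left $\tilde{A}$-module structure coming from pre-composition; this lands in finitely generated graded modules since after forgetting the grading one recovers $\hat{v} F(\tilde{M}) \cong v\tilde{M}$, which is finitely generated over $A$. The shift comparison $\epsilon_{\tilde{M}} : F(\tilde{M})[1] \stackrel{\sim}{\rightarrow} F(\tilde{M}[1])$ is the obvious re-indexing of summands, and the forget comparison $\pi_{\tilde{M}} : \hat{v} F(\tilde{M}) \stackrel{\sim}{\rightarrow} v\tilde{M}$ is the Hom-isomorphism followed by the evaluation $\mathcal A(A, v\tilde{M}) \cong v\tilde{M}$, using $\varphi$. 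A direct unwinding then verifies the commutative diagram of Definition~\ref{qez}.

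\textbf{Verifying the equivalence.} The main work is to show $F$ is an equivalence of abelian categories. I would first establish that $\tilde{P}$ is a finite-length projective generator of $\tilde{\mathcal A}$. \emph{Projectivity}: given an epimorphism $f : \tilde{M} \twoheadrightarrow \tilde{N}$ and a morphism $g : \tilde{P} \to \tilde{N}$, lift $vg$ through $vf$ using projectivity of $v\tilde{P} = A$, decompose the resulting $h : v\tilde{P} \to v\tilde{M}$ into homogeneous components via the Hom-isomorphism, and take the degree-$0$ component as the graded lift of $g$. \emph{Generator}: every simple $\tilde{S} \in \tilde{\mathcal A}$ is by Lemma~\ref{lem:SOL} a lift of a simple $A$-module, which is a quotient of $A = v\tilde{P}$; decomposing such a surjection into homogeneous components produces a non-zero, hence surjective, morphism $\tilde{P} \twoheadrightarrow \tilde{S}[k]$ for some $k$. \emph{Finite length} is inherited from $A$ via the faithful exact functor $v$. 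A standard graded-Morita calculation using presentations $\bigoplus_\alpha \tilde{P}[n_\alpha] \to \bigoplus_\beta \tilde{P}[m_\beta] \to \tilde{M} \to 0$ by shifted copies of $\tilde{P}$, together with the analogous presentations of arbitrary finitely generated graded $\tilde{A}$-modules by shifted copies of $\tilde{A} = F(\tilde{P})$, then yields full faithfulness and essential surjectivity of $F$.

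The principal obstacle is this last graded-Morita step: the ungraded analogue producing $\tilde{\mathcal A} \simeq A\operatorname{-Modf}$ via $\tilde{\mathcal A}(\tilde{P},-)$ is classical, but promoting it to an equivalence of $\ZZ$-categories compatible with the shift $[1]$ demands careful bookkeeping of the direct-sum decomposition of Hom-spaces. Once $F$ is known to be an equivalence, the triple $(F, \pi, \epsilon)$ is a cover-equivalence by construction.
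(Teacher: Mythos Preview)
Your approach is essentially the same as the paper's: lift the regular module $A$ via Proposition~\ref{prop:ProjectivesLift}, read off the grading on $A$ from the Hom-decomposition $\bigoplus_i \tilde{\mathcal A}(\tilde P,\tilde P[i])\cong \operatorname{End}_A(A)\cong A^{\mathrm{op}}$, and then invoke graded Morita theory. The paper's proof stops after extracting the grading, simply writing ``We leave it to the reader to check that this grading on $A$ will do the job''; you have in effect supplied the details the paper omits, by writing down the functor $F(\tilde M)=\bigoplus_i\tilde{\mathcal A}(\tilde P,\tilde M[i])$ explicitly and verifying that $\tilde P$ is a compact projective generator of $\tilde{\mathcal A}$. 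Your projectivity argument (taking the degree-$0$ component of a non-graded lift) and your generator argument are both sound, and the graded-Morita step you flag as the ``principal obstacle'' is indeed routine once these are in place, so the proposal is correct.
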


\begin{proof}
  By \ref{prop:ProjectivesLift} there exists a lift $(\tilde M,\varphi)$
of $A$ in $\tilde{\mathcal A}$. By assumption 
we obtain  isomorphisms
$$\bigoplus_i \tilde{\mathcal A}(\tilde M,\tilde M[i])\;\sira \; 
\op{End}_A(A)\;\stackrel{\sim}{\leftarrow} \; A^{\op{opp}}$$
Here the left map comes from forgetting the grading and the
right map from the action by right multiplication.  We leave it to the reader
to check that this grading on $A$ will do the job. 
\end{proof}

\begin{prop} \label{prop:EquivalenceCriterium}

Let $A$ be  a left-artinian ring and let $\, \wt{} \,$ and $\, \hat{} \,$ be two 
$\ZZ$-gradings on $A$. Then the following statements are equivalent: 

\begin{enumerate}
\item The  $ \ZZ$-graded covers $\wt{A}\operatorname{-Modf}^{\ZZ}$ and $\wh{A}\operatorname{-Modf}^{\ZZ}$
      of $A\operatorname{-Modf}$ are cover-equivalent;
\item There exists a $\ZZ$-grading on the abelian group $A$ 
making it a graded $\wh{A}$-$\wt{A}$-bimodule $\;{^{\wh{}}\!\!A^{\wt{}}}$;
\item  For each complete system of primitive pairwise orthogonal
  idempotents $( \Idp{x} )_{x \in I}$ in 
$A$, homogeneous  for the grading $\wh{A}$, 
  there exist a unit $u \in A^{\times}$ 
and a function $n :I\ra \ZZ$
  such that the homogeneous elements of $\wt{A}$ in degree $i$  
  are given by 
 \begin{center}
     $\wt{A}_i  =  \bigoplus_{x,y \in I} u \Idp{x} \wh{A}_{n(x) - n(y) + i} \Idp{y} u^{-1}$
  \end{center}
\item  There exist a complete system of primitive pairwise orthogonal
  idempotents $( \Idp{x} )_{x \in I}$ in 
$A$, homogeneous  for the grading $\wh{A}$, 
  a unit $u \in A^{\times}$ 
and a function $n :I\ra \ZZ$
  such that the homogeneous elements of $\wt{A}$ in degree $i$  
  are given by 
 \begin{center}
     $\wt{A}_i  =  \bigoplus_{x,y \in I} u \Idp{x} \wh{A}_{n(x) - n(y) + i} \Idp{y} u^{-1}$
  \end{center}      
          
\end{enumerate}

\end{prop}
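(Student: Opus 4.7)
The plan is to proceed in the cycle (1) $\Leftrightarrow$ (2) $\Rightarrow$ (3) $\Rightarrow$ (4) $\Rightarrow$ (2); here (1) $\Leftrightarrow$ (2) and (3) $\Rightarrow$ (4) are unpackings, while the real content sits in (2) $\Rightarrow$ (3) and (4) $\Rightarrow$ (2). For (1) $\Leftrightarrow$ (2), observe that by definition \ref{qez} (and the remark after \ref{def:ZLiftOfFunctor}) a cover-equivalence between two graded covers of $A\op{-Modf}$ is precisely a graded lift of the identity functor $\op{id}_{A\op{-Modf}} = A\otimes_A -$. Remark \ref{ggr}, applied with $X=A$, then immediately translates the existence of such a lift into the existence of a $\ZZ$-grading on $A$ making it into a graded $\hat A$-$\tilde A$-bimodule.

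For (2) $\Rightarrow$ (3), fix a graded bimodule structure $Y$ on $A$ and any given homogeneous complete system $(1_x)_{x\in I}$ of primitive orthogonal idempotents in $\hat A$. Viewing $Y$ as a graded left $\hat A$-module, its underlying ungraded module is $A$ itself, free of rank one, so Corollary \ref{ex:ClassifyingLiftsOfA}(2) supplies a shift function $n:I\to\ZZ$ and a graded isomorphism $\bigoplus_x \hat A 1_x[n(x)]\sira Y$; ungraded, this isomorphism is right multiplication by some unit $u\in A$. The right $\tilde A$-action on $Y$ embeds $\tilde A^{\op{op}}$ into the graded endomorphism ring of $Y$, and a direct block-matrix computation on $\bigoplus_x \hat A 1_x[n(x)]$ identifies its degree-$i$ graded endomorphisms with right multiplication by elements of $\bigoplus_{x,y} 1_x \hat A_{n(x)-n(y)+i} 1_y$. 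Transporting back to $Y=A$ via $u$ yields the formula in (3). The implication (3) $\Rightarrow$ (4) is trivial once we use \ref{ex:ClassifyingLiftsOfA}(1) to produce at least one such idempotent system.

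For (4) $\Rightarrow$ (2), I first reduce to $u=1$ by conjugating the $\tilde A$-grading by $u^{-1}$ (a graded ring automorphism of $A$, and so producing a cover-equivalent variant), so that $\tilde A_i = \bigoplus_{x,y} 1_x \hat A_{n(x)-n(y)+i} 1_y$. I then equip the abelian group $A$ with the grading $M_k := \bigoplus_{x,y} 1_x \hat A_{k-n(y)} 1_y$ and verify by a direct degree count that left multiplication by $\hat A_j$ and right multiplication by $\tilde A_i$ are homogeneous of degrees $j$ and $i$ respectively; this exhibits $A$ as the desired graded $\hat A$-$\tilde A$-bimodule. The main obstacle throughout is the degree bookkeeping at the heart of (2) $\Rightarrow$ (3); once the shift function $n$ is extracted from \ref{ex:ClassifyingLiftsOfA}, the remaining computation is routine, and (4) $\Rightarrow$ (2) is essentially a direct inverse of it.
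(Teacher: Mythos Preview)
Your argument is correct and follows the paper's proof closely: the equivalence (1)\,$\Leftrightarrow$\,(2) via \ref{ggr}, the use of \ref{ex:ClassifyingLiftsOfA} in (2)\,$\Rightarrow$\,(3) to extract the unit $u$ and the shift function $n$, and the trivial step (3)\,$\Rightarrow$\,(4) are all exactly as in the paper.

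The only point where you diverge is in (4)\,$\Rightarrow$\,(2). The paper proceeds directly: it equips $A$ with the grading for which right multiplication by $u$ gives a degree-zero isomorphism ${^{\wh{}}\!\!A^{\wt{}}}\sira\bigoplus_x\hat A 1_x[-n(x)]$, thereby building the required $\hat A$-$\tilde A$-bimodule grading in one stroke with $u$ already incorporated. You instead reduce to $u=1$ and construct $M_k=\bigoplus_{x,y}1_x\hat A_{k-n(y)}1_y$ explicitly. This is fine, but two small caveats: first, conjugation by $u^{-1}$ is not a ``graded ring automorphism of $A$''; it is an isomorphism of graded rings from $\tilde A$ to the conjugated grading $\tilde A'$. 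Second, your reduction literally proves (2) for $\hat A$ and $\tilde A'$, not for $\hat A$ and the original $\tilde A$; to close the loop you are implicitly invoking (1)\,$\Leftrightarrow$\,(2) together with transitivity of cover-equivalence (or, equivalently, right-multiplying your $M_k$ by $u^{-1}$ to get the bimodule grading for the original $\tilde A$). Either fix is routine, but it should be made explicit. The paper's direct approach simply sidesteps this detour.
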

\begin{remarkl}
  The conditions (3) and (4) were
needed for the proof of the main result in  an older version.
In this version, their only role is  making 
our conditions somewhat more explicit, but they are no more 
needed later on.
\end{remarkl}

\begin{proof}

(1) $\Leftrightarrow$ (2) follows from \ref{ggr}.
Next we prove (2) $\Rightarrow$ (3).
 Our graded bimodule  $\;{^{\wh{}}\!\!A^{\wt{}}}$ 
with $\op{id}: \hat v \;{^{\wh{}}\!\!A^{\wt{}}}\sira A$ is
a $\ZZ$-graded lift in
$\wh{A}\operatorname{-Modf}^{\ZZ}$ of the left $A$-module $A$.
By \ref{ex:ClassifyingLiftsOfA}, 
for each complete system of 
pairwise orthogonal idempotents $1_x\in A$, homogeneous for $\hat A$,
there exist integers $n(x)$ along  with an isomorphism
$$\psi:  \;{^{\wh{}}\!\!A^{\wt{}}}\;\sira \;\textstyle \bigoplus_{x \in
    I}\hat{A}\Idp{x}[n(x)]$$ of graded left $\hat A$-modules. 
Here both sides, when considered as ungraded left $A$-modules,
 admit obvious
 natural isomorphisms to the left $A$-module $A$.
In these terms
$\psi$ has to correspond to the right multiplication by a unit $u\in
A^\times$. 
Now certainly $h\mapsto \psi h\psi^{-1}$ 
is an isomorphism between the endomorphism rings of
these graded left $\wh{A}$-modules and 
with $a\mapsto u^{-1}au$ in the lower horizontal 
we  get a commutative diagram
$$
\begin{array}{ccc}
\op{End}_{A}\left( \;{^{\wh{}}\!\!A^{\wt{}}}\;\!\right)&\sira
&\op{End}_{A}\left(\textstyle \bigoplus_{x \in
    I}\hat{A}\Idp{x}[n(x)]\right)\\
\ua\wr&&\wr\ua\\
A&\sira&A
\end{array}$$ 
Here $\op{End}_{A}$ means  endomorphism rings of ungraded modules,
but with 
the grading coming from the grading on our modules, and 
the vertical arrows are meant to map $a\in A$ to  the multiplication 
by $a$ from the right modulo the obvious natural isomorphisms
mentioned above. In particular, the vertical maps are not compatible 
but rather ``anticompatible'' with the
multiplication. Nevertheless, the lower horizontal has
to be homogeneous for the gradings induced
by the vertical isomorphisms  from the upper horizontal
and from that we deduce
$$u^{-1}\wt A_iu=\textstyle  \bigoplus_{x,y}  1_x\wh A_{n(x)-n(y)+i}1_y$$
To prove (3) $\Rightarrow$ (4),
just recall that
by \ref{ex:ClassifyingLiftsOfA} we can 
always
find a complete system of primitive pairwise orthogonal
  idempotents $( \Idp{x} )_{x \in I}$ in $A$, which are 
homogeneous for the grading
  $\wh{A}$.
To finally check (4) $\Rightarrow$ (2),
just equip $A$ with the grading $\;{^{\wh{}}\!\!A^{\wt{}}}\;\!$
for which the right multiplication by $u$ as a map $(\cdot u):
\;{^{\wh{}}\!\!A^{\wt{}}}\;\!\sira \bigoplus_x \hat A 1_x[-n(x)]$ 
is homogeneous of 
degree zero. 
\end{proof}

\section{Gradings and bicentralizing modules}\label{gbz} 
\begin{defi}
 Let $A$ be a ring. An $A$-module $Q$ is called {\bf bicentralizing}
 if and only if the obvious map is an
isomorphism
\begin{equation*}
 A \overset{\sim}{\rightarrow} \op{End}_{\op{End}_A Q} Q
\end{equation*}
\end{defi}
\begin{remarkl}
For the artinian rings describing blocks of category $\mathcal O$,
the modules $Q$  corresponding to the antidominant prinjective
are bicentralizing. Indeed, the Struktursatz \cite{So} tells
us in this case, that the functor
\begin{equation*}
 \op{Hom}_A (\;, Q) : A\op{-Modf}^{\op{opp}} \rightarrow (\op{End}_A Q) \op{-Modf}
\end{equation*}
is fully faithful on projectives. On the other hand
it maps $A$ to $Q$ and thus induces an isomorphism\label{BCRT} 
$
 (\op{End}_A A)^{\op{opp}} \overset{\sim}{\rightarrow} \op{End}_{\op{End}_A Q} Q.
$
I learned this from \cite{CSX}.
\end{remarkl}
\begin{defi}
 Let $A$ be a ring and $Q \in A\op{-Mod}$ a bicentralizing $A$-module. 
Then we call a $\mathbb Z$-grading on $A$
and a $\mathbb Z$-grading on $\op{End}_A Q$ {\bf compatible} 
if and only if there exists a $\mathbb Z$-grading on
$Q$ compatible with both of them.
\end{defi}
\begin{prop}[{\bf Compatibility implies cover-equivalence}] 
 Let $A$ be a left-artinian ring and $Q$ a 
bicentralizing $A$-module. Then any two $\mathbb Z$-gradings on $A$,
which are compatible with the same 
$\mathbb Z$-grading on $\op{End}_A Q$,\label{cG}  
give rise to  cover-equivalent covers of $A\op{-Modf}$.
\end{prop}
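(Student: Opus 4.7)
The plan is to apply Proposition~\ref{prop:EquivalenceCriterium}, specifically the equivalence $(1)\Leftrightarrow (2)$, which reduces the claim to equipping the abelian group $A$ with a $\ZZ$-grading that makes it a graded $\hat{A}$-$\tilde{A}$-bimodule. Using the compatibility hypothesis, I would first select gradings $\tilde{Q}$ and $\hat{Q}$ on $Q$, both graded modules over $\tilde{B}$ (where $\tilde{B}$ denotes $B$ with its fixed common grading), with $\tilde{Q}$ additionally a graded bimodule over $\tilde{A}$ and $\hat{Q}$ over $\hat{A}$. For each $i\in\ZZ$ I would then define
\[
A_i \;:=\; \op{Hom}^{\ZZ}_{\tilde{B}}(\tilde{Q},\hat{Q}[i]),
\]
viewed as a subgroup of $\op{Hom}_{B}(Q,Q)\cong A$ via forgetting the grading together with the bicentralizer identification.

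The heart of the argument will be to verify that $A=\bigoplus_{i}A_i$ as abelian groups. Directness of the sum is immediate, since an element of $A_i$ sends the $s$-th graded piece of $\tilde{Q}$ into the $(s+i)$-th piece of $\hat{Q}$. For exhaustion, given $a\in A$ and $q\in\tilde{Q}_s$, I would project $a\cdot q\in Q$ onto its $\hat{Q}_{s+i}$-component to define $(a)_i$; a direct check shows each $(a)_i$ is $\tilde{B}$-linear and that $a=\sum_i(a)_i$ pointwise. Once this is in place, the bimodule axioms follow readily: for $\hat{a}\in\hat{A}_j$ and $\tilde{a}\in\tilde{A}_k$, post-composing with the graded map $(\hat{a}\cdot)\colon\hat{Q}\to\hat{Q}[j]$ sends $A_i$ into $A_{i+j}$, and pre-composing with the graded map $(\tilde{a}\cdot)\colon\tilde{Q}\to\tilde{Q}[k]$ sends $A_i$ into $A_{i+k}$. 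An invocation of Proposition~\ref{prop:EquivalenceCriterium} then delivers the desired cover-equivalence.

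The hard part will be the \emph{global} finiteness in the exhaustion step: only finitely many of the $(a)_i$ should be non-zero in total, not merely when evaluated on individual $q$. This finiteness is automatic whenever the graded lifts $\tilde{Q}$ and $\hat{Q}$ are both supported in bounded intervals of degree, which holds as soon as $Q$ has finite length as a $B$-module---the situation in all envisaged applications (for instance, the antidominant prinjective of a block of $\scO$ from Remark~\ref{BCRT}, via the Struktursatz). In a fully general setting one would need to exploit the artinian hypothesis on $A$ and the finite $\tilde{A}$- and $\hat{A}$-homogeneous decompositions of any given $a\in A$ to bound the range of $i$ for which $(a)_i\neq 0$.
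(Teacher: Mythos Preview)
Your approach coincides with the paper's: both transport the grading along the bicentralizer isomorphism $A\overset{\sim}{\rightarrow}\op{Hom}_{\op{End}_A Q}(\tilde Q,\hat Q)$ to obtain a graded $\hat A$-$\tilde A$-bimodule structure on $A$, and then invoke Proposition~\ref{prop:EquivalenceCriterium}. The finiteness subtlety you flag in the exhaustion step is genuine and is glossed over in the paper's terse proof as well; it disappears in the intended applications, where $A$ is a finite-dimensional algebra over a field and hence all gradings in sight are bounded.
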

\begin{proof}
 Let $\tilde A$ and $\hat A$ be our two $\mathbb Z$-gradings on 
$A$. By assumption, there exist $\mathbb Z$-gradings 
$\tilde Q$ and $\hat Q$ on $Q$
compatible
with the given grading on
$\op{End}_A Q$ and  compatible
with the gradings $\tilde A$ and $\hat A$ of $A$ respectively.
But then let us consider the isomorphism
\begin{equation*}
 A \overset{\sim}{\rightarrow} \op{Hom}_{\op{End}_A Q} (\tilde Q, \hat Q)
\end{equation*}
given by the left action of $A$ on $Q$ and denote by $\;{^{\wh{}}\!\!A^{\wt{}}}$  the group $A$ with
the $\mathbb Z$-grading coming from the right hand side of this isomorphism by transport of
structure.
Then $\;{^{\wh{}}\!\!A^{\wt{}}}$ with its obvious left and right action is a $\mathbb Z$-graded $\hat A$-$\tilde A$-bimodule
and the proof is finished by \ref{prop:EquivalenceCriterium}.
\end{proof}
\begin{prop}[{\bf Compatibility criterion}] 
 Let $A$ be a left-artinian ring and let $Q$ be a projective finite
   length bicentralizing left\label{CCE} 
$A$-module with commutative endomorphism ring.
Then the compatibility of a grading on $A$  with a  grading on $\op{End}_A
Q$ is equivalent to the homogeneity of the composition
\begin{equation*}
 \op{End}_A Q \rightarrow \op{End}_{\op{End}_A Q} Q \overset{\sim}{\leftarrow} A
\end{equation*}
\end{prop}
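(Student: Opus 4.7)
The forward implication is essentially a calculation. If there is a grading on $Q$ making it a graded module over both $\tilde A$ and the given grading $\tilde B$ on $B := \op{End}_A Q$, then the composition $\varphi\colon B\to A$ of the proposition sends $b\in B$ to the unique $a\in A$ satisfying $aq = b(q)$ for all $q\in Q$. For $b$ homogeneous of degree $i$ and $q$ of degree $j$, both $aq$ and $b(q)$ must be homogeneous of degree $i+j$, forcing $a$ homogeneous of degree $i$.

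For the converse, my plan is to construct a compatible grading on $Q$ by lifting. Since $Q$ is a projective $A$-module, Proposition \ref{prop:ProjectivesLift} provides a graded lift $\tilde Q$ of $Q$ to $\tilde A\op{-Modf}^{\ZZ}$. This lift induces a grading on $B$ via the identification $B\cong\bigoplus_{i\in\ZZ}\op{Hom}_{\tilde A}(\tilde Q,\tilde Q[i])$; call this induced grading $B^{\tilde Q}$. By the same computation as in the forward direction, the composition $\varphi$ is homogeneous with respect to $B^{\tilde Q}$ and $\tilde A$ as well. It remains only to show that $B^{\tilde Q}$ coincides with the given grading $\tilde B$.

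This is where the commutativity and bicentralizing hypotheses play their role. Commutativity of $B$ makes right multiplication by $b\in B$ a $B$-linear endomorphism of $Q$, yielding an inclusion $B\hookrightarrow\op{End}_B Q$, which is injective because $B$ acts faithfully on $Q$ by the very definition $B=\op{End}_A Q$. Composing with the bicentralizing isomorphism $A\overset{\sim}{\to}\op{End}_B Q$ realizes $\varphi\colon B\hookrightarrow A$ as an injection. Once we have injectivity, the rigidity step finishes the proof: an injective and homogeneous ring homomorphism $\varphi\colon B\to A$ forces $B_i=\varphi^{-1}(A_i)$, since any decomposition $\varphi(b)=\sum a_j$ into homogeneous components in $A$ must match a decomposition $b=\sum b_j$ in $B$. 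Applying this to both the given $\tilde B$ and the induced $B^{\tilde Q}$ shows they coincide, so $\tilde Q$ furnishes the desired grading on $Q$. The main obstacle, and the only place where both hypotheses on $Q$ are genuinely used, is this rigidity step along the injection $\varphi$.
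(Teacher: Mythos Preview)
Your argument is correct, but it takes a longer route than the paper's. Both directions agree on the forward implication and on invoking Proposition~\ref{prop:ProjectivesLift} to produce a graded $\tilde A$-module lift $\tilde Q$ of $Q$. The divergence is in what you do next. You introduce the induced grading $B^{\tilde Q}$ on $B=\op{End}_A Q$, then use injectivity of $\varphi$ (from commutativity plus faithfulness of the $B$-action) together with a pullback rigidity step to force $B^{\tilde Q}=\tilde B$. The paper instead observes directly that the very definition of $\varphi$ says the $B$-action on $Q$ is the restriction along $\varphi$ of the $A$-action: $b\cdot q=\varphi(b)\,q$. Hence once $\varphi\colon\tilde B\to\tilde A$ is homogeneous, \emph{any} $\tilde A$-grading on $Q$ is automatically a $\tilde B$-grading, and one is done in a single sentence. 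Your detour through $B^{\tilde Q}$ and the injectivity argument is unnecessary---the injectivity of $\varphi$ is never actually needed---though it does yield the marginally stronger side conclusion that the induced grading on $B$ coincides with the given one.
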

\begin{proof}
 If the gradings are compatible, clearly this composition is homogeneous.
If on the other hand the composition is homogenous, 
then any grading on the $A$-module $Q$ will show
the compatibility.
However such a grading always exists by \ref{prop:ProjectivesLift}.
\end{proof}

\section{Cover equivalence is an equivalence relation} 

\begin{remarkl}
  Clearly cover-equivalence of graded covers is a reflexive relation.
We are now going to\label{coeq} 
show it is also symmetric and 
transitive,
so it is indeed  an equivalence relation on the set of
graded covers of a fixed artinian
category.
\end{remarkl} 
\begin{lemma}
  Any graded lift $\tilde F$ 
of an equivalence $F$  of
artinian categories is again an equivalence of categories.
\end{lemma}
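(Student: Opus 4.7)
The plan is to verify in turn that $\tilde F$ is exact, fully faithful, and essentially surjective.

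For exactness, I would note that $\pi:w\tilde F\siRa Fv$ identifies $w\tilde F$ with a composite of exact functors, hence $w\tilde F$ is exact; and that $w$ reflects zero objects (otherwise the Hom-iso of a graded cover would force the identity of a nonzero object to become zero), so being additionally exact it reflects mono- and epimorphisms. Applied to any short exact sequence in $\tilde{\mathcal A}$, this forces the image under $\tilde F$ to be short exact in $\tilde{\mathcal B}$.

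For fully faithfulness, for each $M,N\in\tilde{\mathcal A}$ I would look at the square
\[
\begin{array}{ccc}
\bigoplus_i\tilde{\mathcal A}(M,N[i])&\sira&\mathcal A(vM,vN)\\
\downarrow&&\downarrow\\
\bigoplus_i\tilde{\mathcal B}(\tilde F M,\tilde F N[i])&\sira&\mathcal B(w\tilde F M,w\tilde F N),
\end{array}
\]
whose horizontals are the Hom-isos from the graded-cover axiom, whose right vertical is the bijection induced by the equivalence $F$ together with the identification $\pi$, and which commutes by the compatibility diagram of Definition \ref{def:ZLiftOfFunctor}. Hence the left vertical is a bijection; the $\ZZ$-functor structure $\epsilon:[1]\tilde F\siRa\tilde F[1]$ ensures it preserves the decomposition by shift, so each component $\tilde{\mathcal A}(M,N[i])\to\tilde{\mathcal B}(\tilde F M,\tilde F N[i])$ is a bijection.

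For essential surjectivity I would induct on the length of $Y\in\tilde{\mathcal B}$. If $Y$ is simple, \ref{lem:SOL} applied to $\tilde{\mathcal B}$ says $wY$ is simple in $\mathcal B$; since $F$ is an equivalence there is a simple $S\in\mathcal A$ with $FS\cong wY$, and \ref{lem:SOL} applied to $\tilde{\mathcal A}$ supplies a simple lift $X$ of $S$. Then $w\tilde F X\cong FvX\cong wY$, so \ref{lem:SOL} gives $\tilde F X\cong Y[i]$ for some $i\in\ZZ$, and replacing $X$ by $X[-i]$ settles the simple case. For $Y$ of length $>1$ I would pick a simple subobject $Y'\hookrightarrow Y$ with quotient $Y''$ of smaller length, choose lifts $X',X''$ by induction, and try to lift the extension class of $Y$ in $\op{Ext}^1_{\tilde{\mathcal B}}(\tilde F X'',\tilde F X')$ to a class $\tilde e\in\op{Ext}^1_{\tilde{\mathcal A}}(X'',X')$; the middle term of the corresponding sequence will then be the desired $X$.

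The hard part is precisely this Ext-lifting. To handle it I would invoke the equivalent characterization \ref{ACO}, which provides for any graded cover the iso $\bigoplus_i\op{Ext}^1_{\tilde{\mathcal A}}(M,N[i])\sira\op{Ext}^1_{\mathcal A}(vM,vN)$; combining this on both sides with the Ext$^1$-iso induced by the equivalence $F$ (after the identification $\pi$), and repeating the degree-preservation argument from the fully faithful step, one obtains an iso on each graded component $\op{Ext}^1_{\tilde{\mathcal A}}(X'',X')\sira\op{Ext}^1_{\tilde{\mathcal B}}(\tilde F X'',\tilde F X')$, which in particular lifts the class of $Y$ and completes the induction.
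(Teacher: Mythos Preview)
Your argument is correct. The exactness and fully faithfulness parts match the paper's reasoning almost verbatim; in fact you are slightly more careful, since you state and prove exactness of $\tilde F$ explicitly, whereas the paper uses it tacitly when it forms the image of the composed morphism at the end.

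The genuine difference lies in essential surjectivity. You proceed by induction on the length of $Y\in\tilde{\mathcal B}$: the base case is handled via \ref{lem:SOL} on both sides, and the inductive step lifts the extension class of $Y$ through the $\op{Ext}^1$-isomorphism supplied by \ref{ACO}, together with the naturality of $\pi:w\tilde F\siRa Fv$ and the $\DZ$-functor structure $\epsilon$ to see that the resulting bijection respects the shift-grading. The paper instead argues in a single step: given $\tilde B$, it chooses $A\in\mathcal A$ with $FA\cong w\tilde B$, invokes axiom~(2) of a graded cover and its dual (via \ref{ACO}) to sandwich $A$ as $v\tilde X\sra A\hra v\tilde Y$, transports this under $F$ to a sandwich $w\tilde F\tilde X\sra w\tilde B\hra w\tilde F\tilde Y$, splits both maps into homogeneous components to obtain an epi--mono factorisation $\bigoplus_i\tilde F\tilde X[i]\sra\tilde B\hra\bigoplus_j\tilde F\tilde Y[j]$ in $\tilde{\mathcal B}$, and then uses full faithfulness of $\tilde F$ to see that the composite comes from a morphism in $\tilde{\mathcal A}$ whose image is the desired preimage of $\tilde B$. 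Your route is more structural and reusable (it isolates exactly the $\op{Ext}^1$-bijection one needs), while the paper's sandwich trick is shorter and avoids any induction, at the cost of invoking both axiom~(2) and its dual and of implicitly relying on the exactness of $\tilde F$ that you took the trouble to establish.
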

\begin{proof}
 Since a direct sum of morphisms of abelian groups is an isomorphism if and
only if the individual morphisms are isomorphisms, a graded lift of a
fully faithful additive functor is clearly fully faithful itself.
We just have to show that if $F$ is an equivalence of categories, 
then
$\tilde F$ is essentially surjective. 
So take an object $\tilde B\in\tilde{\mathcal B}$.
  By assumption there is an object $A\in \mathcal A$ with an isomorphism
$FA\sira w\tilde B$. By the definition of graded cover and \ref{ACO},
there are $\tilde X,\tilde Y\in\tilde{\mathcal A}$ with
an epimorphism and a monomorphism 
$v\tilde X\sra A\hra v\tilde Y$. Applying $F$ we find
an epimorphism and a monomorphism 
$w\tilde F\tilde X\sra w\tilde B\hra w\tilde F\tilde Y$. 
Now if $\lambda_i:\tilde F\tilde X[i]\ra \tilde B$ for $i$
runnig through a finite set $I\subset\DZ$ of degrees are
the homogeneous components of the first map, then
they together define the left  epimorphism of a sequence
$$\bigoplus_{i\in I}\tilde F\tilde X[i]\sra \tilde B\hra \bigoplus_{j\in J}\tilde F\tilde Y[j]$$
 in $\tilde{\mathcal B}$. The left monomorphism is constructed dually.
But the composition in this sequence has to come from
a morphism in $\tilde{\mathcal A}$, and 
the image of this morphism is the looked-for object of 
 $\tilde{\mathcal A}$ essentially going to $\tilde{B}$
under our functor $\tilde F$. 
\end{proof}
\begin{remarkl}[{\bf Symmetry of cover-equivalence}] 
 In particular,
given a cover-equivalence $(F,\pi)$ of graded covers  the functor $F$ is 
always an
equivalence of categories. 
Given a quasiinverse $(G,\eta)$ with
$\eta:\op{Id}\stackrel{\sim}{\RA} FG $ an isotransformation,
from $\pi: \hat v F\stackrel{\sim}{\RA} \tilde v$ we get
as the composition $\hat v\stackrel{\sim}{\RA}  \hat v FG
\stackrel{\sim}{\RA} \tilde v G$ or more precisely 
 $(\pi G)(\hat v\eta)$ an isotransformation 
$\tau: \hat v\stackrel{\sim}{\RA}  \tilde v G$.
Similarly from $\epsilon:[1] F\stackrel{\sim}{\RA} F[1]$
we get a unique $\epsilon:[1] G\stackrel{\sim}{\RA} G[1]$
such the composition 
$$ [1]\stackrel{\sim}{\RA} [1]FG\stackrel{\sim}{\RA}F[1]G
\stackrel{\sim}{\RA}FG[1]\stackrel{\sim}{\RA}[1]$$
with our adjointness $\eta$ at both ends and the old and the newly to be defined
$\varepsilon$ in the middle ist the identity transformation. 
Then one may check that
$(G,\tau,\epsilon)$ is also a cover-equivalence. 
\end{remarkl}

\begin{remarkl}[{\bf Transitivity of cover-equivalence}]
 Let finally be given artinian categories $\calA, \calB, \calC$,
  additive functors
$F: \calA \rightarrow \calB$ and $G: \calB \rightarrow \calC$, 
graded covers $(\wt{\calA}, v, \eps), (\wt{\calB},  w , \eta),
(\wt{\calC}, u, \theta)$ of
$\calA,\calB,\calC$, and graded lifts $(\tilde F,\pi,\varepsilon)$ of $F$ and
$(\tilde G,\tau,\varepsilon)$ of  $G$. 
Then $(\tilde G\tilde F, (G\pi)(\tau\tilde F),\varepsilon)$ is a lift of $GF$,
where we leave the definition of the last $\varepsilon$ to the reader. 
In particular the relation of cover-equivalence  is transitive.  
\end{remarkl}

\section{Proof of the Main Theorem}
\begin{remarkl}[{\bf Push-forward of graded covers}] 
Let $(\wt{\mathcal A},\td v, \td \eps)$ 
be a\label{qezt} 
  graded cover of an artinian category $\mathcal A$ and
let $ E:\mathcal A\sirra \mathcal B$ be an equivalence of categories.
Then obviously 
$(\wt{\mathcal A}, E \td v,  E(\td \eps))$ is a graded cover of
$\mathcal B$. We call it the ``push-forward'' 
of our graded cover of $\mathcal A$
along $ E$. 
Obviously
two graded covers of $\mathcal A$ are cover-equivalent 
iff their pushforwards are
cover-equivalent as graded covers of $\mathcal B$. 
\end{remarkl}

\begin{remarkl}
Recall the block decomposition of $\mathcal O$. 
  It is enough to prove theorem \ref{thm:MainTheorem} for each block
  $\scO_{\lambda}$ of $\scO$.
\end{remarkl}

\begin{thm}[{\bf Uniqueness of graded covers of $\scO$}]
If two graded covers 
$(\wt{\scO}_{\lambda}, \wt{v}, \wt{\eps})$ and\label{thm: GradingsOnO}
$(\wh{\scO}_{\lambda}, \wh{v}, \wh{\eps})$ of 
a block $\scO_{\lambda}$ of category $\scO$ are both compatible 
with the action of the center, they are cover-equivalent.
\end{thm}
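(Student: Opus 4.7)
The plan is to reduce the statement to the module-theoretic setting and then invoke the machinery of Section~\ref{gbz}. Fix a projective generator $P$ of $\scO_\lambda$, set $A=\op{End}_\frg(P)^{\op{opp}}$, and use the equivalence $\op{Hom}_\frg(P,-):\scO_\lambda\sirra A\op{-Modf}$. Pushing both given covers forward along this equivalence via Remark~\ref{qezt} and then applying Proposition~\ref{grCO}, I may assume the two covers are of the form $\tilde A\op{-Modf}^\DZ$ and $\hat A\op{-Modf}^\DZ$ for two $\DZ$-gradings on the algebra $A$, and the task becomes to prove these two graded covers of $A\op{-Modf}$ are cover-equivalent. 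The compatibility with the $Z$-action is formulated purely in terms of endomorphism rings and the centre, hence transfers intact through these reductions.

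Next I introduce the bicentralizing module. Let $Q\in A\op{-Modf}$ correspond to the antidominant projective $P(w_0\cdot\lambda)\in\scO_\lambda$; by the Struktursatz, recalled in Remark~\ref{BCRT}, $Q$ is a projective bicentralizing $A$-module of finite length whose endomorphism ring is commutative, and the $Z$-action furnishes an isomorphism $Z/\chi^n\sira\op{End}_A Q$ for $\chi=\chi_\lambda$ and $n$ such that $\chi^n$ annihilates $Q$. By Proposition~\ref{prop:ProjectivesLift}, $Q$ admits graded lifts $\tilde Q\in\tilde A\op{-Modf}^\DZ$ and $\hat Q\in\hat A\op{-Modf}^\DZ$. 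Applied to these lifts, the compatibility of each cover with the $Z$-action asserts that the isomorphism $Z/\chi^n\sira\op{End}_A Q$ is homogeneous with the natural grading on the source and the grading on the target induced by the respective cover; since this map is a bijection, both covers induce the \emph{same} grading on $\op{End}_A Q$, namely the one transported from the natural grading on $Z/\chi^n$.

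Finally, I apply the compatibility criterion \ref{CCE} and conclude. Let $\tilde P\in\tilde A\op{-Modf}^\DZ$ be a graded lift of the regular module $A\in A\op{-Modf}$, existing again by Proposition~\ref{prop:ProjectivesLift}. The compatibility of the first cover with the $Z$-action applied to $\tilde P$ says that the map $Z/\chi^n\to\op{End}_A(A)=A^{\op{opp}}$ encoding the action of $Z$ on $A$ is homogeneous for $\tilde A$; under the Struktursatz identification $Z/\chi^n\cong\op{End}_A Q$ this map coincides with the composition $\op{End}_A Q\ra\op{End}_{\op{End}_A Q} Q\sila A$ of Proposition~\ref{CCE}. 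Hence $\tilde A$ is compatible with the grading on $\op{End}_A Q$ fixed in the previous paragraph, and by the same argument so is $\hat A$. Proposition~\ref{cG} then produces the desired cover-equivalence. The main obstacle I foresee is the careful identification in the last step: checking that the action of a central element on the projective generator $P$, under the Struktursatz identification of $Z/\chi^n$ with $\op{End}_A Q$, really does agree with the composition $\op{End}_A Q\ra\op{End}_{\op{End}_A Q} Q\sila A$ coming from the bicentralizer property of $Q$; this amounts to a diagram chase comparing the $Z$-actions on $P$ and on $Q$ via the bicentralizer isomorphism $A\sira\op{End}_{\op{End}_A Q} Q$.
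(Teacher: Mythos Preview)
Your proof is correct and follows essentially the same route as the paper's: reduce to $\DZ$-gradings on a left-artinian ring $A$ via \ref{qezt} and \ref{grCO}, use the antidominant prinjective to get a bicentralizing module $Q$, show both gradings are compatible with the same grading on $\op{End}_A Q$, and conclude via \ref{cG}. One small inaccuracy: the Endomorphismensatz gives only a \emph{surjection} $Z/\chi^n\twoheadrightarrow\op{End}_A Q$, not an isomorphism (the left side grows with $n$, the right is fixed), but your argument goes through unchanged since a homogeneous surjection still determines the target grading uniquely, and homogeneity of $Z/\chi^n\to A$ still descends to homogeneity of $\op{End}_A Q\to A$.
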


\begin{proof}
  We find a left artinian ring $A$ 
along with an equivalence $E:\mathcal O_\lambda\sira A\op{-Modf}$. 
It is sufficient to show that the pushforwards of our covers along
$E$ are cover-equivalent. Both these pushforwards 
are  cover-equivalent to 
covers corresponding to $\DZ$-gradings $\hat A$ and $\tilde A$ on
$A$, which are compatible with the action of the center  in the sense
that the maps $Z/\chi^n\ra \hat A$ and $Z/\chi^n\ra \tilde A$
are homogeneous for $\chi\in\op{Max}Z$ the corresponding central character
and $n$ so big, that our maps are well defined. 
But now the antidominant prinjective of our block corresponds to a 
bicentralizing $A$-module $Q$ by \ref{BCRT}, and the 
action of the center induces a surjection $Z/\chi^n\sra \op{End}_AQ$
by the Endomorphismensatz of \cite{So}, and by the compatibility 
criterion \ref{CCE} both our $\DZ$-gradings on $A$ are compatible with the
same $\DZ$-grading on $\op{End}_AQ$. Then however 
the corresponding covers are cover-equivalent by \ref{cG}.
\end{proof}


\begin{thebibliography}{RSW13}

\bibitem[BG76]{BGGO}
J.N. Bernstein and S.I. Gelfand.
\newblock {A certain category of $\mathfrak{g}$-modules}.
\newblock {\em Func. Anal. and its Appl.}, 10:1--8, 1976.

\bibitem[BG86]{BGi}
Alexander~A. Beilinson and Victor Ginsburg.
\newblock Mixed categories, {E}xt-duality and representations (results and
  conjectures).
\newblock Preprint, 1986.

\bibitem[BGS96]{BGSo}
Alexander~A. Beilinson, Victor Ginzburg, and Wolfgang Soergel.
\newblock {K}oszul duality patterns in representation theory.
\newblock {\em JAMS}, 9(2):473--527, 1996.

\bibitem[KSX01]{CSX}
Steffen K{\"o}nig, Inger~Heidi Slung{\aa}rd, and Changchang Xi.
\newblock Double centralizer properties, dominant dimension, and tilting
  modules.
\newblock {\em Journal of Algebra}, 240(1):393--412, 2001.

\bibitem[Liu09]{Liu}
Qing Liu.
\newblock {\em {Algebraic geometry and arithmetic curves}}.
\newblock Oxford Graduate Texts in Mathematics, 2009.

\bibitem[RSW13]{RSW}
Simon Riche, Wolfgang Soergel, and Geordie Williamson.
\newblock Modular {K}oszul duality.
\newblock {\em Compositio}, to appear:1--60, 2013.

\bibitem[Soe90]{So}
Wolfgang Soergel.
\newblock {Kategorie $\mathscr O$, Perverse Garben und Moduln {\"u}ber den
  Koinvarianten zur Weylgruppe}.
\newblock {\em Journal of the American Mathematical Society}, 3 Nr. 2:421--445,
  1990.

\bibitem[Soe92]{HCH}
Wolfgang Soergel.
\newblock The combinatorics of {H}arish-{C}handra bimodules.
\newblock {\em Journal f\"ur die reine und angewandte Mathematik}, 429:49--74,
  1992.

\bibitem[Soe00]{So-R}
Wolfgang Soergel.
\newblock On the relation between intersection cohomology and representation
  theory in positive characteristic.
\newblock {\em J. of pure and applied Algebra}, 152:311--335, 2000.

\end{thebibliography}
\end{document}